\newcommand{\unp}{\mathbf{\mathrm{1 \kern-0.25em I}}}
\newcommand{\un}{\mathbf{1}}
\newcommand{\X}{\mathbf X}
\newcommand{\Y}{\mathbf Y}
\newcommand{\R}{\mathbb R}
\newcommand{\N}{\mathbb N}
\newcommand{\di}{{\rm d}}
\newcommand{\cal}{\mathcal}
\newcommand{\lu}{{\mathcal F^1_u}}
\newcommand{\lup}{{\mathcal F^p_u}}
\newcommand{\esp}{\mathbb E}
\newcommand{\prob}{\mathbb P}
\newcommand{\eupp}{\mathcal E_u^p}
\newcommand{\eupa}{\mathcal E^{1+\alpha}_u}
\newcommand{\eupda}{\mathcal E^{2+\alpha}_u}
\newcommand{\eup}{\mathcal E_u^1}
\newcommand{\supp}{\mathrm{supp}\,}
\newtheorem{theorem}{Theorem}[section]
\newtheorem*{theorem*}{Theorem}
\newtheorem*{lemma*}{Lemma}
\newtheorem*{corollary*}{Corollary}
\newtheorem{lemma}[theorem]{Lemma}
\newtheorem{proposition}[theorem]{Proposition}
\newtheorem*{proposition*}{Proposition}
\newtheorem{corollary}[theorem]{Corollary}
\theoremstyle{definition}
\newtheorem{definition}[theorem]{Definition}
\newtheorem{example}[theorem]{Example}
\newtheorem*{hypothesis*}{Hypothesis}
\newtheorem*{notations*}{Notations}
\theoremstyle{remark}
\newtheorem{remark}[theorem]{Remark}
\numberwithin{equation}{section}
\begin{document}

\title[Induction of Markov chains]{Induction of Markov chains, drift functions and application to the LLN, the CLT and the LIL with a random walk on $\R_+$ as an example.}


\author{Jean-Baptiste Boyer}
\email{jean-baptiste.boyer@math.u-bordeaux.fr}

\keywords{Markov Chains, Induction, Kac's lemma, Central Limit Theorem, Random walk on the half line}

\date{\today}


\begin{abstract} Let $(X_n)$ be a Markov chain on a standard borelian space $\X$. Any stopping time $\tau$ such that $\esp_x\tau$ is finite for all $x\in\X$ induces a Markov chain in $\X$. In this article, we show that there is a bijection between the invariant measures for the original chain and for the induced one.

We then study drift functions and prove a few relations that link the Markov operator for the original chain and for the induced one. The aim is to use this drift function and the induced operator to link the solution to Poisson's equation $(I_d-P)g=f$ for the original chain and for the induced one.

We also see how drift functions can be used to control excursions of the walk and to obtain the law of large numbers, the central limit theorem and the law of the iterated logarithm for martingales.

We use this technique to study the random walk on $\R_+$ defined by $X_{n+1} = \max( X_n + Y_{n+1}, 0)$ where $(Y_n)$ is an iid sequence of law $\rho^{\otimes \N}$ for a probability measure $\rho$ having a finite first moment and a negative drift.
\end{abstract}

\maketitle

\tableofcontents

\section*{Introduction}
For a standard Borelian space $\X$, let $\theta$ be the shift on $\X^{\N}$.

Let $(X_n)$ be a Markov chain on $\X$. We define a Markov operator on $\X$ setting, for a borelian function $f$, $Pf(x)=\esp[f(X_1)|X_0=x]=\esp_x f(X_1)=\int f(X_1)\di\prob_x((X_n))$.

Given a stopping time $\tau$ such that for any $x\in\X$, $\prob_x(\tau<\infty)=1$, we can study the Markov chain $(X_{\tau^n})_{n\in\N}$ where $\tau^n$ is defined by
\[
\left\{\begin{array}{rl}\tau^0((X_n))&=0\\
\tau^{k+1}((X_n)) &=\tau^{k}((X_n))+ \tau(\theta^{\tau^{k}((X_n))}(X_n))
\end{array}\right.
\]
We call $(X_{\tau^n})$ the induced Markov chain and we note $Q$ the Markov operator associated to it, that is, for a borelian function $g$ on $\X$, 
\[Q(g)(x)=\int_{\{\tau<+\infty\}} g(X_\tau)\di\prob_x((X_n))
\]

We define two operators on $\X$ by setting, for a borelian non negative function~$f$ and $x\in\X$,
\begin{align*}
Sf(x)&=\int_{\{\tau=1\}} f(X_1)\di\prob_x((X_n)) \\
Rf(x) &=\int_{\{\tau<+\infty\}} f(X_0)+\dots + f(X_{\tau-1})\di\prob_x((X_n))
\end{align*}

First, these operators allow us to prove the following lemma which is a gneralization of Kac's lemma for Markov chains.

\begin{lemma*}[\ref{lemma:Kac}]
Let $(X_n)$ be a Markov chain on a complete separable metric space $\X$.

Let $\mu$ be a finite $P-$invariant measure on~$\X$ and $\tau$ a $\theta-$compatible stopping time (see definition~\ref{def:theta_compatible}) such that for $\mu$-a.e $x\in \X$, $\lim_{n\to+\infty}\prob_x(\tau\geqslant n) =0$.

For any non negative borelian function $f$ on $\X$, we have
\[\int_\X f\di\mu=\int_\X SRf\di\mu\]
\end{lemma*}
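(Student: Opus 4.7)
The plan is to express $R$ as a Neumann-type series for an auxiliary operator and then invoke the $P$-invariance of $\mu$ term-by-term, using a truncation to sidestep any $\infty-\infty$ issue. Concretely, I would introduce the operator $T := P - S$, which by definition satisfies $Tf(x) = \esp_x[f(X_1)\mathbf{1}_{\tau > 1}]$ (using the standard convention $\tau \geq 1$). By induction on $k \geq 0$, and invoking the Markov property together with the $\theta$-compatibility of $\tau$ (which gives $\mathbf{1}_{\tau > k} = \mathbf{1}_{\tau > 1}\cdot(\mathbf{1}_{\tau > k-1})\circ \theta$ for $k \geq 1$), I expect to establish the key identity
\[
T^k f(x) = \esp_x\bigl[f(X_k)\,\mathbf{1}_{\tau > k}\bigr].
\]
Summing over $k$ and applying Tonelli (since $f \geq 0$) then yields $Rf = \sum_{k \geq 0} T^k f$.

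To circumvent any potential $\infty-\infty$ issue when integrating, I would truncate at level $N$ by setting $R_N f = \sum_{k=0}^{N-1} T^k f$. Since $S = P - T$, a telescoping computation gives
\[
SR_N f = P R_N f - \sum_{k=1}^{N} T^k f,
\]
and integrating against $\mu$ while invoking $P$-invariance term-by-term on each non-negative $T^k f$ collapses everything to
\[
\int SR_N f \,\di\mu \;=\; \int f \,\di\mu - \int T^N f \,\di\mu.
\]

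It then remains to let $N \to \infty$. On the left, $R_N f \uparrow Rf$, so monotone convergence (applied inside the positive kernel $S$) gives $\int SR_N f\,\di\mu \uparrow \int SRf\,\di\mu$. For the remainder, $\int T^N f\,\di\mu = \esp_\mu[f(X_N)\mathbf{1}_{\tau > N}]$: the $P$-invariance of $\mu$ makes $f(X_N)$ equidistributed with $f(X_0) \in L^1(\mu)$, hence uniformly integrable, while $\prob_\mu(\tau > N) \to 0$ follows from the pointwise hypothesis $\prob_x(\tau \geq n) \to 0$ by bounded convergence. Uniform integrability then forces the remainder to vanish, giving $\int SRf\,\di\mu = \int f\,\di\mu$ as required.

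The main obstacle is the first step: the identity $T^k f = \esp_{\cdot}[f(X_k)\mathbf{1}_{\tau > k}]$ is where the $\theta$-compatibility of $\tau$ is really used, and it requires careful bookkeeping with the Markov property along the shifted path. Once that operator identity is in place, everything that follows is linearity, monotone convergence, and a standard uniform-integrability argument.
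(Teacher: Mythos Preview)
Your argument is correct and is essentially the paper's own: both truncate $R$ to a finite sum $R_N$ (your $\sum_{k<N}T^k$ with $T=P-S$ coincides with the paper's operator associated to $\min(\tau,n)$), telescope via $P$-invariance to obtain $\int SR_N f\,\di\mu=\int f\,\di\mu$ minus a remainder of the form $\esp_\mu[f(X_N)\un_{\tau>N}]$, and let $N\to\infty$ using monotone convergence on the left and the hypothesis $\prob_x(\tau\geqslant n)\to 0$ on the right. The only wrinkle is that your uniform-integrability treatment of the remainder tacitly assumes $f\in \mathrm L^1(\mu)$, whereas the paper first reduces to bounded $f$; you should add the one-line monotone approximation $f_n=f\wedge n\uparrow f$ to cover the case $\int f\,\di\mu=\infty$.
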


Actually, the operator $R$ is implicitely defined (and studied) by many authors (see for instance~\cite{Kre85}, the end of \S 3.4, the sub paragraph called ``Induced operators" where he even proves our generalization of Kac's lemma in the case of a stopping time which is the first return to some set) but this functional analytic point of view we developp allows us to deal with it easily and, using the operator $S$, we don't need the stopping time to be the first return to some set.

Moreover, it allows us to prove the following
\begin{corollary*}[\ref{corollaire:mesures}]
Let $(X_n)$ be a Markov chain on a complete separable metric space $\X$ and $\tau$ a $\theta-$compatible stopping time (see definition~\ref{def:theta_compatible}) such that for any $x\in\X$, $\esp_x\tau$ is finite.
Define $P$, $Q$, $R$ and $S$ as previously and assume that $QR1$ is bounded on $\X$.

\medskip
Then, $S^\star$ and $R^\star$ are reciproqual linear bijections between the $P-$invariant finite measures and the $Q-$invariant ones which preserve ergodicity.
\end{corollary*}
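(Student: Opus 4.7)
The plan is to prove three pointwise operator identities and then combine them with Kac's lemma.

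First, a direct computation using the Markov property together with the measurability $\{k<\tau\}\in\mathcal F_k$ yields
\[
RPf(x)=\esp_x\Big[\sum_{k=0}^{\tau-1}Pf(X_k)\Big]=\esp_x\Big[\sum_{k=1}^{\tau}f(X_k)\Big],
\]
so $R(I-P)=I-Q$ as a pointwise identity on non-negative borelian functions. Second, splitting $Qf(x)$ along $\{\tau=1\}\sqcup\{\tau\geqslant 2\}$ and using the strong Markov property with $\theta$-compatibility (which identifies the shifted stopping time on $\{\tau\geqslant 2\}$) yields $Qf=Sf+PQf-SQf$, i.e.\
\[
(I-P)Q=S(I-Q).
\]
Third, expanding
\[
RSf(x)=\esp_x\Big[\sum_{k=0}^{\tau-1}\mathbf{1}_{\tau\circ\theta^k=1}f(X_{k+1})\Big]
\]
and using $\theta$-compatibility in the form $\tau\circ\theta^k=\tau-k$ on $\{\tau>k\}$ collapses the sum to the single term $k=\tau-1$, giving
\[
RS=Q.
\]

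Given these, the two stability statements are immediate. For $P$-invariant finite $\mu$ the second identity and $P$-invariance yield $\int Qg\,\di(S^\star\mu)=\int SQg\,\di\mu=\int Sg\,\di\mu=\int g\,\di(S^\star\mu)$, so $S^\star\mu$ is $Q$-invariant, and its mass $\int S1\,\di\mu\leqslant\mu(\X)$ is finite. For $Q$-invariant finite $\nu$ the first identity (rewritten $RP=R-I+Q$) gives $\int Pf\,\di(R^\star\nu)=\int Rf\,\di\nu-\int f\,\di\nu+\int Qf\,\di\nu=\int f\,\di(R^\star\nu)$, so $R^\star\nu$ is $P$-invariant. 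The one place where the hypothesis ``$QR1$ bounded'' is used is the finiteness of $R^\star\nu$: applying $Q$-invariance of $\nu$ to the bounded truncations $R1\wedge n$ and taking monotone limits gives $\int R1\,\di\nu=\int QR1\,\di\nu\leqslant\|QR1\|_\infty\,\nu(\X)<\infty$.

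The reciprocity then comes from Kac's lemma (which says $\int f\,\di\mu=\int SRf\,\di\mu$ for $P$-invariant $\mu$, i.e.\ $R^\star S^\star\mu=\mu$) together with the third identity (which says, for $Q$-invariant $\nu$, $\int g\,\di(S^\star R^\star\nu)=\int RSg\,\di\nu=\int Qg\,\di\nu=\int g\,\di\nu$, i.e.\ $S^\star R^\star\nu=\nu$). Ergodicity is preserved by positivity and linearity: if $\mu$ is $P$-ergodic and $S^\star\mu=\nu_1+\nu_2$ with $Q$-invariant finite positive summands $\nu_i$ (necessarily nonzero, since $R^\star\nu_i=0$ would force $0=\int R1\,\di\nu_i\geqslant\nu_i(\X)$), then $\mu=R^\star\nu_1+R^\star\nu_2$ decomposes $\mu$ into $P$-invariant summands; $P$-ergodicity forces $R^\star\nu_i\propto\mu$, and applying $S^\star$ gives $\nu_i=S^\star R^\star\nu_i\propto S^\star\mu$. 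The converse direction is symmetric.

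The main technical obstacle lies in the pointwise identities, especially $(I-P)Q=S(I-Q)$ and $RS=Q$, each of which requires careful bookkeeping with $\theta$-compatibility on the events $\{\tau=n\}$. Once these identities are established, the bijection and the preservation of ergodicity are formal consequences of positivity, linearity and Kac's lemma.
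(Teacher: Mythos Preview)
Your argument is correct and follows the same architecture as the paper: the operator identities of Proposition~\ref{proposition:relation_operateur_positive_functions}, Kac's lemma for $R^\star S^\star\mu=\mu$, the relation $RS=Q$ for $S^\star R^\star\nu=\nu$, and the extreme-point characterisation of ergodic measures. One point worth flagging: your proof that $R^\star\nu$ is finite is actually simpler than the paper's. The paper (in Lemma~\ref{lemma:Retoile_nu}) invokes the Chacon--Ornstein ergodic theorem to bound $\int R1\,\di\nu$, whereas you simply apply $Q$-invariance to the bounded truncations $R1\wedge n$, use $Q(R1\wedge n)\leqslant QR1\leqslant\|QR1\|_\infty$, and pass to the limit by monotone convergence; this is both shorter and more elementary. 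A minor care point: in your computation $\int Pf\,\di(R^\star\nu)=\int Rf\,\di\nu-\int f\,\di\nu+\int Qf\,\di\nu$ you should, strictly, first write the additive form $RPf+f=Rf+Qf$ (valid pointwise for non-negative $f$) and integrate, so that the cancellation $\int f\,\di\nu=\int Qf\,\di\nu$ happens between two finite quantities even before you know $\int Rf\,\di\nu<\infty$; the paper is explicit about this in its proof of Lemma~\ref{lemma:Retoile_nu}.
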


\medskip
For us, the aim is to prove the central limit theorem for which a standard technique is to find a solution of the \emph{``Poisson equation"} $g-Pg=f$ for $f$ in a certain Banach space.

Indeed, Maigret showed in~\cite{Mai78} that if the chain is positively Harris recurrent with unique invariant measure $\mu$ and $f\in\mathrm L^2(\X,\mu)$ is such that there exist $g\in\mathrm L^2(\X,\mu)$ with $f=g-Pg$, then the CLT holds for $f$ and $\mu-$a.e starting point.

The first idea to solve this equation was to find a spectral gap in some nice space. In particular in $\mathrm{L}^2(\X,\mu)$ where $\mu$ is an invariant measure. This is used by many authors (see for instance~\cite{Ros71}).

An other idea, is to use a stopping time to induce a Markov chain whose study is easier. Glynn and Meyn, in~\cite{GM96}, used this to get the CLT for any starting point assuming the chain is irreducible and there exist a so called \emph{petite set} (a set where the induced chain becomes iid). They used a stopping time which was the time of first return to the petite set to create a pseudo-atom (see~\cite{Num78} and~\cite{Che99}) and find a solution to Poisson's equation.

This is this technique we study in section~\ref{section:drift} and where we show how to find a solution for the Poisson's equation for the original chain when we can solve it for the induced one when there is what we call a ``drift function" (see proposition~\ref{proposition:relations_operateurs} and remark~\ref{remark:poisson}). Actually, when the petite set is compact, one can prove that the induced operator is quasi-compact.

\medskip
We then use this relations to prove a Law of large numbers for martingales (proposition~\ref{proposition:lln_martingales}) and that what Brown calls the Lindeberg condition in~\cite{Bro71} holds in lemma~\ref{lemma:lindeberg}. This is a first step towards the central limit theorem and the law of the iterated logarithm for martingales as we see in corollary~\ref{corollary:TCL_martingales}.

\medskip
Finally,we use this technique to study the random walk on $\R_+$ defined by 
\[
X_{n+1} = \max( X_n + Y_{n+1}, 0)
\]
where $(Y_n)$ is an iid sequence of law $\rho^{\otimes \N}$ for a probability measure $\rho$ having a quadratic moment and a negative drift and we will prove the following

\begin{proposition*}[\ref{proposition:LLN_CLT_LIL_marche_max}]
Let $\rho$ be a probability measure on $\R$ having a finite first moment and a negative drift $\lambda = \int_\R y\di\rho(y)$.

Then, the random walk on $\R_+$ have a finite invariant probability measure $\mu$.

Moreover, for any $\alpha\in \R_+$, if there is $\varepsilon \in \R_+^\star$ such that $\int_\R |y|^{2+\alpha + \varepsilon} \di\rho(y)$ is finite, then for any $f \in \mathcal{F}_{u_{\alpha}}^{1}$ and any $x\in \R_+$,
\[
\frac 1 n \sum_{k=0}^{n-1} f(X_k) \xrightarrow[n\to +\infty]\, \int f \di \mu \; a.e. \text{ and in }\mathrm{L}^1(\prob_x)
\]

Finally, if there is $\varepsilon\in \R_+^\star$ such that $\int_\R |y|^{4+\alpha + \varepsilon} \di\rho(y)$ then for any $f\in \mathcal{E}_{u_\alpha}$ and any $x\in \R_+$, 
\[
\sigma^2_n(f,x)= \frac 1 n \esp_x \left|\sum_{k=0}^{n-1} f(X_k) - \int f \di \mu \right|^2 
\]
converges to some $\sigma^2(f)$ and if $\sigma^2(f) \not =0$, then
\[
\frac 1 {\sqrt n} \sum_{k=0}^{n-1} f(X_k) \xrightarrow {\cal L} \cal N\left( \int f\di\mu, \sigma^2(f) \right)
\]
and
\[
\limsup \frac{\sum_{k=0}^{n-1} f(X_k) -\int f\di \mu}{\sqrt{2n \sigma^2(f) \ln\ln(n)}} =1 \;a.e.\text{ and }\liminf \frac{\sum_{k=0}^{n-1} f(X_k) - \int f \di \mu}{\sqrt{2 n \sigma^2(f) \ln\ln(n)}} =-1 \; a.e.
\]
\end{proposition*}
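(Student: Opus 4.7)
The plan is to specialize the abstract machinery of Section~\ref{section:drift} (generalized Kac's lemma, drift functions, induced Poisson equation, martingale CLT) to this concrete chain. The key observation is that negative drift $\lambda<0$ pulls large values of $X_n$ back, and this is quantitatively encoded by polynomial drift functions $u_\beta(x)=(1+x)^\beta$. A Taylor expansion of
\[
Pu_\beta(x)-u_\beta(x) = \int_\R \bigl(1+\max(x+y,0)\bigr)^\beta\di\rho(y) - (1+x)^\beta
\]
gives $Pu_\beta(x)-u_\beta(x)\sim \beta\lambda(1+x)^{\beta-1}$ at infinity as soon as $\int_\R |y|^\beta\di\rho(y)<\infty$, and one deduces the polynomial drift inequality
\[
Pu_\beta - u_\beta \leqslant -c\,u_{\beta-1} + b\unp_{[0,M]}\quad\text{on }\R_+,
\]
for some $c,b>0$ and $M$ large enough, the reflection at $0$ contributing only to the compactly supported correction.

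Take then $\tau = \inf\{n\geqslant 1 : X_n\leqslant M\}$. Iterating the drift inequality for $\beta=1,2,\dots$ produces $\esp_x \tau \leqslant C u_1(x)$, more generally $Ru_{\beta-1}\leqslant C u_\beta$, and bounds $R\un$ on $[0,M]$. The induced chain $(X_{\tau^n})$ lives on the compact set $[0,M]$, so it admits at least one invariant probability $\nu$, and $QR\un$ is bounded. Corollary~\ref{corollaire:mesures} then produces the finite $P$-invariant probability measure $\mu$ proportional to $R^\star\nu$, establishing the first assertion.

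For the LLN, given $f\in\mathcal F^1_{u_\alpha}$ with $\int f\di\mu=0$, the bound $|f|\leqslant Cu_\alpha$ combined with $Ru_\alpha\leqslant Cu_{\alpha+1}$ (using the $(1+\alpha)$-th moment of $\rho$, hence afforded by the $(2+\alpha+\varepsilon)$-th moment hypothesis) puts $Rf$ in the corresponding weighted class on $[0,M]$. I would solve the induced Poisson equation $(I-Q)h=Rf$ on $[0,M]$ by quasi-compactness (the induced operator on a compact set, restricted to a suitable weighted space, has spectral gap) and lift to a solution $g$ of $(I-P)g = f-\mu(f)$ on $\R_+$ via Proposition~\ref{proposition:relations_operateurs}. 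The almost sure and $\mathrm L^1(\prob_x)$ convergence then follow from Proposition~\ref{proposition:lln_martingales} applied to the martingale increments $g(X_{k+1})-Pg(X_k)$. For the CLT and the LIL one runs the same construction with $f^2\in\mathcal F^1_{u_{2\alpha}}$ in place of $f$ in order to control the martingale bracket $\langle M\rangle_n$ — this is precisely what costs two extra powers in the moment hypothesis, raising it from $2+\alpha+\varepsilon$ to $4+\alpha+\varepsilon$ — and finishes through Lemma~\ref{lemma:lindeberg} and Corollary~\ref{corollary:TCL_martingales}.

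The main obstacle is Step~1: producing the polynomial drift inequality with the sharp exponent $\beta-1$ on the right-hand side, uniformly in $x\in\R_+$. One has to split the $y$-integral into $\{y\geqslant -(1+x)/2\}$, where $\max$ is inactive and a straightforward Taylor expansion to order two yields the leading term $\beta\lambda u_{\beta-1}$, and $\{y<-(1+x)/2\}$, where the reflection forces a jump to $0$ and contributes at most $\int_{|y|\geqslant (1+x)/2}|y|^\beta\di\rho(y)$, which is $o(u_{\beta-1})$ thanks to the finiteness of the $\beta$-th moment. Once this drift inequality is established with the correct exponent, together with the quasi-compactness of $Q$ on $[0,M]$ in the ambient weighted space, all three conclusions of the proposition drop out of the abstract theorems of the previous sections.
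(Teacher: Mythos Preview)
Your overall plan is sound and would work, but it misses the key simplification the paper exploits and therefore takes a longer route with one soft spot.

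The paper does \emph{not} induce on a general compact interval $[0,M]$. It uses the stopping time
\[
\tau = \inf\{n\geqslant 1 : X_{n-1}+Y_n \leqslant 0\},
\]
i.e.\ the first time the unreflected increment would cross zero. Because of the reflection, this forces $X_\tau=0$ deterministically, so the induced chain is the trivial chain sitting at the single atom $\{0\}$: $Qf\equiv f(0)$, the $Q$-invariant measure is $\delta_0$, and $\mu(f)=Rf(0)/R1(0)$ directly. The induced Poisson equation is then vacuous, and one checks in one line that $g=R\bigl(f-\mu(f)\bigr)$ already solves $(I_d-P)g=f-\mu(f)$ via the identity $(I_d-P)R=I_d-SR$ of Proposition~\ref{proposition:relations_operateurs}, since $SR\hat f=R\hat f(0)\,\prob_x(\tau=1)=0$ for $\hat f=f-\mu(f)$. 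No spectral analysis of $Q$ is needed at all.

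Your route via $\tau_{[0,M]}$ is the general Meyn--Tweedie strategy and is correct in spirit, but the step ``solve $(I-Q)h=Rf$ on $[0,M]$ by quasi-compactness'' is asserted, not proved: quasi-compactness of the induced operator on a compact recurrent set is not automatic and requires a Doeblin/minorization condition on $[0,M]$, which in turn needs some regularity of $\rho$ (e.g.\ a nontrivial absolutely continuous part, or that $0$ is reachable from every $x\in[0,M]$ with uniformly positive probability). For this specific walk that can be arranged, but it is extra work that the paper's choice of stopping time avoids entirely. The drift computation and the endgame through Proposition~\ref{proposition:lln_martingales}, Lemma~\ref{lemma:lindeberg} and Corollary~\ref{corollary:TCL_martingales} are the same in both approaches.
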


\section{Induced Markov chains}
\label{section:induction}

\subsection{Definitions}

Let $(X_n)$ be a Markov chain on a standard Borel space $\X$. We define a Markov operator on $\X$ setting, for a borelian function $f$ and $x\in\X$, \[Pf(x)=\esp[f(X_1)|X_0=x]\]

Given a stopping time $\tau$, we can study the Markov chain $(X_{\tau^n})_{n\in\N}$ where $\tau^n$ is defined by
\[\left\{\begin{array}{rl}
\tau^0((X_n))&=0\\
\tau^{k+1}((X_n)) &=\tau^{k}((X_n))+ \tau(\theta^{\tau^{k}((X_n))}(X_n))
\end{array}\right.\]
where $\theta$ stands for the shift on $\X^\N$.

We note $Q$ the sub-Markov operator associated to $(X_{\tau^n})$, that is, for a borelian function $g$ on $\X$ and $x\in\X$, 
\[
Q(g)(x)=\int_{\{\tau<+\infty\}} g(X_\tau)\di\prob_x((X_n))
\]
If, for any $x\in\X$, $\prob_x(\tau\text{ is finite})=1$, then $Q$ is a Markov operator.

Finally, we define two other operator on $\X$ setting, for a borelian non negative function~$f$ and $x\in\X$,
\begin{eqnarray}
Sf(x)=\int_{\{\tau=1\}} f(X_1)\di\prob_x((X_n)) \label{eqn:S} \\
Rf(x) =\int_{\{\tau<+\infty\}} f(X_0)+\dots + f(X_{\tau-1})\di\prob_x((X_n))\label{eqn:R}
\end{eqnarray}

\begin{definition}[$\theta-$compatible stopping times]~\label{def:theta_compatible}\\
We say that a stopping time $\tau$ is $\theta-$compatible if for all $x\in\X$, $\prob_x(\{\tau=0\})=0$ and for $\prob_x-$a.e. $(X_n)\in\X^{\N}$, $\tau((X_n))\geqslant 2$ implies that $\tau(\theta(X_n))=\tau((X_n))-1$. 
\end{definition}

\begin{example}
If $\tau$ is bounded (there exits $M\in\R$ such that for any $x\in \X$ and $\prob_x-$a.e $(X_n)\in \X^\N$, $\tau \leqslant M$), then $\tau$ is not $\theta-$compatible.
In particular, for any stopping time $\tau$ and any $n\in \N$, $\min(\tau,n)$ is not $\theta-$compatible.
\end{example}

\begin{example}\label{example:linearly_recurrent}
Let $\Y$ be a borelian subset of $\X$ and $\tau_{\Y}$ the time of first return in $\Y$:
\[\tau_{\Y}((X_n)) =\inf \{n\in\N^\star ;\; X_n\in \Y\}\]
Then, $\tau_\Y$ is $\theta-$compatible.

Moreover, $\tau^n_{\Y}$ as we defined it coresponds to the time of $n$-th return to $\Y$.

For $x\in \X$, we set $u(x)= \esp_x \tau_{\Y}$ and we call $\Y$ strongly Harris-recurrent if $u$ is finite on~$\X$. This imply in particular that for any $x$ in $\X$, $\tau\Y$ is $\prob_x-$a.e. finite.

Indeed for any borelian non negative function $f$ and any $x\in \X$, we have that

\begin{flalign*}
Qf(x)&= \int_{\{\tau<+\infty\}} f(X_\tau)\di\prob_x = \sum_{n=1}^{+\infty} \esp_x f(X_n)\un_{\{\tau=n\}} &\\
& = \sum_{n=1}^{+\infty} \esp_x f(X_n)\un_{\Y^c}(X_1)\dots \un_{\Y^c}(X_{n-1})\un_{\Y}(X_n) &\\
& = \sum_{n=1}^{+\infty} (P\un_{\Y^c})^{n-1}P(f\un_{\Y})= \sum_{n=0}^{+\infty} (P\un_{\Y^c})^{n}P(f\un_{\Y})& 
\end{flalign*}
\begin{flalign*}
Rf(x)&=\int_{\{\tau<+\infty\}} f(X_0)+\dots+f(X_{\tau-1}) \di\prob_x= \sum_{n=0}^{+\infty}\esp_x f(X_n)\un_{\{\tau\geqslant n+1\}} &\\
&= f(x)+\sum_{n=1}^{+\infty} \esp_x f(X_n)\un_{\Y^c}(X_1)\dots \un_{\Y^c}(X_n)& \\
&= \left(f(x)+\sum_{n=1}^{+\infty} (P\un_{\Y^c})^n(f)(x)\right) =  \sum_{n=0}^{+\infty} (P\un_{\Y^c})^n(f)(x) &
\end{flalign*}
\begin{flalign*}
Sf(x)&=\int_{\{\tau=1\}} f(X_1)\di\prob_x=\int f(X_1)\un_{\Y}(X_1)\di\prob_x=P(f\un_{\Y})& 
\end{flalign*}
Thus, we have that $(R+Q)f=(I_d+RP)f$, $RSf=Qf$, $(P-S)Qf=P(\un_{\Y^c}Qf)=Qf-Sf$ and $(P-S)Rf=P(\un_{\Y^c}Rf)=Rf-f$.

Note that $P,Q,R,S,P-S,Q-S$ and $R-I_d$ are positive operators and so the computations we made make sense for any non negative borelian function $f$.
\end{example}

Next lemma generalizes those relations for any $\theta-$compatible stopping time.
\begin{proposition}
\label{proposition:relation_operateur_positive_functions}Let $\tau$ be a $\theta-$compatible stopping time such that for any $x\in\X$, $\tau$ is $\prob_x-$a.e. finite.

For any non negative borelian function $f$ on $\X$, we have : 
\begin{align*}
(R+Q)f&=(I_d+RP)f\\
(I_d+PR)f &= (I_d+S)Rf \\
(I_d+S)Q f &=(S+PQ)f \\
RS f &= Qf 
\end{align*}
\end{proposition}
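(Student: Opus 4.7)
The plan is to prove each identity by expanding the operators as expectations over paths of $(X_n)$ and then applying two ingredients: the Markov property (to restart the chain after one step) and the $\theta$-compatibility of $\tau$ (to identify $\tau\circ\theta^k$ with $\tau-k$ on the event $\{k<\tau\}$, by an easy induction on $k$). Since every operator in play is positivity-preserving and $\tau$ is $\prob_x$-a.e.\ finite, I can freely swap sums and expectations when working with non negative $f$.

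I would tackle $(R+Q)f = (I_d+RP)f$ first, as it requires only the Markov property. Writing
\[
RPf(x) = \sum_{k=0}^{+\infty} \esp_x\bigl[Pf(X_k)\un_{\{k<\tau\}}\bigr]
\]
and noting that $\{k<\tau\}\in\cal F_k$, the Markov property rewrites each summand as $\esp_x[f(X_{k+1})\un_{\{k<\tau\}}]$, shifting the index by one. This should yield $\esp_x\sum_{j=1}^{\tau}f(X_j)$, which equals $Rf(x)-f(x)+Qf(x)$ after extracting the endpoints $f(X_0)=f(x)$ and $f(X_\tau)$ (well defined because $\tau$ is a.s.\ finite). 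Notice that $\theta$-compatibility is not needed here.

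For $(I_d+PR)f=(I_d+S)Rf$ and $(I_d+S)Qf=(S+PQ)f$, the right device is the decomposition $\{\tau=1\}\cup\{\tau\geq 2\}$: on $\{\tau\geq 2\}$ the $\theta$-compatibility gives $\tau\circ\theta=\tau-1$ and $X_{1+\tau\circ\theta}=X_\tau$, while on $\{\tau=1\}$ the Markov property restarts a fresh stopping-time problem from $X_1$. For the first identity I would compute $PRf(x)=\esp_x Rf(X_1)=\esp_x\sum_{j=1}^{\tau\circ\theta}f(X_j)$ by the Markov property, then split: the event $\{\tau\geq 2\}$ contributes $Rf(x)-f(x)$ and the event $\{\tau=1\}$ contributes $\esp_x[\un_{\{\tau=1\}}Rf(X_1)]=SRf(x)$. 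The identity for $PQf$ follows in the same way from $PQf(x)=\esp_x\un_{\{\tau\circ\theta<\infty\}}f(X_{1+\tau\circ\theta})$, with the two events contributing $Qf(x)-Sf(x)$ and $SQf(x)$ respectively.

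The identity $RSf=Qf$ is the most delicate since it uses iterated $\theta$-compatibility. After expanding $RSf(x)=\sum_{k\geq 0}\esp_x[Sf(X_k)\un_{\{k<\tau\}}]$ and using the Markov property at time $k$ to identify $Sf(X_k)$ with the conditional expectation of $f(X_{k+1})\un_{\{\tau\circ\theta^k=1\}}$ given $\cal F_k$, iterated $\theta$-compatibility yields $\tau\circ\theta^k=\tau-k$ on $\{k<\tau\}$, so $\un_{\{\tau\circ\theta^k=1\}}\un_{\{k<\tau\}}=\un_{\{\tau=k+1\}}$; summing in $k$ collapses the series to $\esp_x[f(X_\tau)\un_{\{\tau<\infty\}}]=Qf(x)$. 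The main obstacle throughout is bookkeeping the $\{\tau=1\}$ versus $\{\tau\geq 2\}$ split (or its analogue $\{k<\tau\}$ versus $\{k\geq\tau\}$) and verifying by induction the extended compatibility $\tau\circ\theta^k=\tau-k$ on $\{\tau\geq k+1\}$ from the definition, which only covers the case $k=1$.
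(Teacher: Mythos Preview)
Your proposal is correct and follows essentially the same approach as the paper's own proof: the first identity via the Markov property alone, the second and third via the split $\{\tau=1\}\cup\{\tau\geqslant2\}$ together with $\theta$-compatibility on $\{\tau\geqslant2\}$, and the fourth via iterated compatibility $\tau\circ\theta^k=\tau-k$ on $\{k<\tau\}$. The only difference is presentational: you make the inductive extension of compatibility explicit, whereas the paper uses it tacitly.
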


\begin{proof}
Let $f$ be a borelian non negative function on $\X$ and $x\in \X$.

Using the Markov property and $\tau$ being a $\theta-$compatible stopping time, we have that for any $n\in \N^\star$,
\[
\esp_x f(X_n)\un_{\{\tau \geqslant n\}} = \esp_x Pf(X_{n-1}) \un_{\{\tau \geqslant n\}}
\]
And so,
\begin{flalign*}
(R+Q)f(x)&=\esp_x f(X_0)+\dots+f(X_\tau) \di\prob_x = \esp_x \sum_{n=0}^{+\infty} f(X_n)\un_{\{\tau\geqslant n\}}& \\
&= f(x)+\sum_{n=1}^{+\infty} \esp_x Pf(X_{n-1})\un_{\{\tau\geqslant n\}} = f(x) + RPf(x)
\end{flalign*}

Moreover, as $\tau$ is $\theta-$compatible,
\[
\int_{\{\tau\geqslant 2\}} Rf(X_1)\di\prob_x((X_n)) = \int_{\{\tau\geqslant 2\}} f(X_1) + \dots + f(X_{\tau-1})\di\prob_x((X_n))
\]
Thus,
\begin{flalign*}
f(x)+&PRf(x)  =f(x)+\int_{\{\tau=1\}} Rf(X_1)\di\prob_x ((X_n))+ \int_{\{\tau\geqslant 2\}} Rf(X_1)\di\prob_x((X_n)) & \\
&= f(x)+SRf(x)+ \int_{\{\tau\geqslant 2\}} f(X_1)+\dots+ f(X_{\tau-1}) \di\prob_x((X_n)) & \\
&= SRf(x)+ \int f(X_0)+\dots + f(X_{\tau-1}) \di\prob_x ((X_n))= SRf(x)+Rf(x)&
\end{flalign*}
Then, by definition of $S$, $\int_{\{\tau=1\}} f(X_1) \di\prob_x ((X_n)) = \int_{\{\tau=1\}} f(X_\tau)  \di\prob_x ((X_n))$, so,
\begin{flalign*}
Sf(x)+PQf(x) & =\int \un_{\{\tau =1\}} \left(f(X_1)+Qf(X_1)\right) + \un_{\{\tau\geqslant 2\}} f(X_\tau)  \di\prob_x((X_n))\\
& = SQf(x)+Qf(x) &
\end{flalign*}
Finally, for any $n\in \N^\star$,
\[
\esp_x Sf(X_{n-1})\un_{\{\tau\geqslant n\}} = \int_{\{\tau=n+1\}} f(X_{n+1})
\]
therefore,
\begin{flalign*}
RSf(x) &= \esp_x \sum_{n=1}^{+\infty} Sf(X_{n-1})\un_{\{\tau\geqslant n\}}  =\sum_{n=1}^{+\infty} \esp_x  Sf(X_{n-1})\un_{\{ \tau\geqslant n\}} &\\
&= \sum_{n=1}^{\infty} \esp_x f(X_{n+1})\un_{\{\tau=n+1\}}= Qf(x)&
\end{flalign*}
\end{proof}

\begin{lemma}\label{lemma:Kac}
Let $\mu$ be a finite $P-$invariant measure on~$\X$ and $\tau$ a $\theta-$compatible stopping time such that for $\mu$-a.e $x\in \X$, $\lim_{n\to+\infty}\prob_x(\tau\geqslant n) =0$.

For any non negative borelian function $f$ on $\X$, we have
\[\int_\X f\di\mu=\int_\X SRf\di\mu\]
\end{lemma}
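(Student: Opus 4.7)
My plan is to apply the identity $(I_d+PR)f = (I_d+S)Rf$ from Proposition~\ref{proposition:relation_operateur_positive_functions}, rewritten as
\[
f + PRf = Rf + SRf,
\]
which is valid as an equality of $[0,+\infty]$-valued functions for any non-negative borelian $f$.

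First, I would integrate both sides against the finite $P$-invariant measure $\mu$. By $P$-invariance, $\int PRf\,d\mu = \int Rf\,d\mu$ in $[0,+\infty]$ (no integrability is needed since $Rf$ is non-negative), yielding
\[
\int f\,d\mu + \int Rf\,d\mu \;=\; \int SRf\,d\mu + \int Rf\,d\mu.
\]
Whenever $\int Rf\,d\mu$ is finite, cancellation of this common term gives $\int f\,d\mu = \int SRf\,d\mu$ immediately.

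The main obstacle is the case $\int Rf\,d\mu = +\infty$, where the cancellation above is not permitted. I would address this by monotone approximation: construct a sequence of non-negative borelian functions $f_n \uparrow f$ with $\int Rf_n\,d\mu < +\infty$, apply the previous step to each $f_n$, and pass to the limit via monotone convergence---legitimate on both sides because $S$ and $R$ are positive linear operators, so $SRf_n \uparrow SRf$ pointwise and $\int f_n \,d\mu \uparrow \int f\,d\mu$. To produce such $f_n$, I would exploit the hypothesis $\prob_x(\tau\geq k) \to 0$ for $\mu$-a.e.\ $x$: writing $\tilde\mu = \mu\otimes\prob_\cdot$ for the stationary extension on $\X^\N$, Fubini gives $\int Rg\,d\mu = \esp_{\tilde\mu}\bigl[\sum_{k<\tau} g(X_k)\bigr]$ for non-negative $g$, so it suffices to choose $f_n = (f\wedge n)\,\un_{A_n}$ for an increasing exhaustion $A_n\uparrow \X$ of sub-level sets of $x\mapsto \esp_x\bigl[\sum_{k<\tau} (f\wedge n)(X_k)\bigr]$. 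A diagonal extraction then closes the argument.
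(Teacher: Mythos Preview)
Your opening step---integrating the identity $f+PRf=Rf+SRf$ and cancelling $\int Rf\,d\mu$ when it is finite---matches the paper exactly. The difficulty, as you recognize, is the case $\int Rf\,d\mu=+\infty$, and here your approximation scheme does not work.

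The problem is that your proposed $f_n=(f\wedge n)\un_{A_n}$ need not satisfy $\int Rf_n\,d\mu<\infty$. You define $A_n$ as a sub-level set of $x\mapsto R(f\wedge n)(x)$, but $Rf_n(x)=\esp_x\bigl[\sum_{k<\tau}(f\wedge n)(X_k)\un_{A_n}(X_k)\bigr]$ depends on whether the \emph{states} $X_k$ lie in $A_n$, not on whether the starting point $x$ does; restricting to $x\in A_n$ gives no control on $Rf_n$ integrated over all of $\X$. More fundamentally, the only hypothesis on $\tau$ is $\prob_x(\tau\geqslant n)\to 0$, which does not even imply $\esp_x\tau<\infty$, let alone $\int\esp_x\tau\,d\mu<\infty$. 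Consequently there is no reason for $\int Rg\,d\mu$ to be finite for \emph{any} nontrivial bounded $g\geqslant 0$, and no diagonal extraction will produce the required sequence.

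The paper circumvents this by truncating the \emph{stopping time} rather than the function: for bounded $f$ it sets $R_nf(x)=\esp_x\sum_{k<\min(\tau,n)}f(X_k)$, which is bounded by $n\|f\|_\infty$ and hence lies in $L^1(\mu)$ since $\mu$ is finite. Because $\min(\tau,n)$ is not $\theta$-compatible, the identity picks up an error term, $PR_nf=SR_nf+R_{n+1}f-f$, and integrating gives $\bigl|\int SR_nf\,d\mu-\int f\,d\mu\bigr|\leqslant\|f\|_\infty\int\prob_x(\tau\geqslant n)\,d\mu(x)$, which tends to zero precisely by the hypothesis on $\tau$. Monotone convergence in $n$ then handles bounded $f$, and a further monotone approximation handles unbounded $f$. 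This use of the hypothesis is the missing idea in your argument.
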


\begin{proof}
According to proposition~\ref{proposition:relation_operateur_positive_functions},
$f+PRf=Rf+SRf$. So, if $Rf\in\mathrm{L}^1(\X,\mu)$, as $\mu$ is $P-$invariant, we get the lemma. 

If $f\not\in \mathrm{ L}^1(\X,\mu)$, we will get the lemma by approximation.

First, we assume that $f$ is bounded. 
In general, $Rf\not\in \mathrm L^1(\X,\mu)$ so, we approximate it with a sequence of integrable functions.

More precisely, for $n\in\N^\star$, we note $R_n$ the operator defined like $R$ but associated to the stopping time $\min(n,\tau)$ (which is not $\theta-$compatible).

That is to say, for a borelian non negative function $f$ and any $x\in \X$,
\[
R_nf(x)=\esp_x \sum_{k=0}^{\min(\tau,n)-1} f(X_k)
\]

As $\{\min(\tau,n)=1\}=\{\tau=1\}$ for $n\geqslant 2$, the operator $S$ associated to $\min(\tau,n)$ does not depend on $n$ for $n\geqslant 2$.

As $\min(\tau,n)$ is not $\theta-$compatible, we can't use proposition~\ref{proposition:relation_operateur_positive_functions},
but we have for $n\geqslant 2$, that
\begin{flalign*}
PR_nf(x)&=\esp_x \sum_{k=0}^{ \min(\tau\circ\theta,n)-1} f(X_{k+1}) \\
&= SR_nf(x)+ \int_{\{\tau\geqslant 2\}} \sum_{k=0}^{\min( \tau-1,n)-1}f(X_{k+1}){\rm d}\prob_x &\\
&= SR_nf(x) + \int_{\{\tau\geqslant 2\}}\sum_{k=1}^{\min(\tau,n+1)-1} f(X_k){\rm d}\prob_x  = SR_n f +R_{n+1}f-f
\end{flalign*}

And, as $f$ is bounded, for any $x\in\X$, $|R_nf(x)|\leqslant n\|f\|_\infty$ and so $R_nf$ is integrable since $\mu$ is a finite measure and,
\begin{flalign*}
\int SR_nf-f\di\mu &= \int PR_nf -R_{n+1} f\di\mu=\int R_nf-R_{n+1} f\di\mu&\\
&=\int f(X_n)\un_{\{\tau\geqslant n\}} \di\prob_x((X_n))\di\mu(x) \\
&= \int Pf(X_{n-1})\un_{\{\tau\geqslant  n\}}\di\prob_x((X_n))\di\mu(x)
\end{flalign*}
So,
\[\left|\int SR_n f-f\di\mu\right|\leqslant \|f\|_\infty \int_X \prob_x(\{\tau\geqslant n\})\di\mu(x)\xrightarrow\, 0 \text{ (by monotone convergence)}\]
and using the monotone convergence theorem, we get the expected result for borelian bounded functions.

If $f$ is not bounded and not negative, we take $(f_n)$ to be an increasing sequence of bounded positive functions which converges to $f$ and we get the expected result by monotone convergence.
\end{proof}

\begin{example}
If $\tau$ is the return time to some strongly Harris-recurrent set $\Y$, then $Sf(x)=P(f\un_\Y)(x)$.
Moreover for every $P-$invariant measure $\mu$ and every $f\in\mathrm L^1(\X,\mu)$, such that $Rf$ is $\mu-$a.e. finite, $\int_\X SRf{\rm d\mu}=\int_\Y Rf{\rm d}\mu$.

In particular, with $f=1$, we have that, $\int_\Y \esp\tau\di\mu=\mu(\X)$. This is Kac's lemma for dynamical systems.
\end{example}

\subsection{Application to the study of invariant measures}

In this subsection, $X$ is a complete separable metric space endowed with it's Borel tribe and ``measure'' stands for ``borelian measure''. We assume that there exist (at least) a $P-$invariant probability measure on~$\X$.

We also fix a $\theta-$compatible stopping time $\tau$ such that for any $x$ in $\X$, $\esp_x\tau$ is finite.

\begin{lemma}\label{lemma:Setoile_mu}
Let $\mu$ be a finite non-zero $P-$invariant borelian measure on $\X$. Then, $S^\star\mu$ is a finite non-zero $Q-$invariant measure on $\X$.

Moreover, $R^\star S^\star \mu=\mu$ and $S^\star\mu$ is absolutely continuous with respect to $\mu$.
\end{lemma}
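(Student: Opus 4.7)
The plan is to verify, in order, that $S^\star\mu$ is finite, that $R^\star S^\star\mu=\mu$ (which simultaneously forces $S^\star\mu$ to be non-zero), that $S^\star\mu$ is $Q$-invariant, and finally that $S^\star\mu\ll\mu$. Throughout, I would interpret the adjoints via the pairing $\int f\di(T^\star\mu)=\int Tf\di\mu$ for $T\in\{S,R,Q\}$ applied to non-negative borelian functions. The two main inputs are the operator identities of Proposition~\ref{proposition:relation_operateur_positive_functions} and Kac's lemma (Lemma~\ref{lemma:Kac}); note that the running hypothesis $\esp_x\tau<+\infty$ for every $x$ gives $\prob_x(\tau\geqslant n)\to 0$, so Kac's lemma applies under our assumption on $\mu$.

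For finiteness I would observe that, pointwise, $Sf\leqslant Pf$ for every non-negative borelian $f$ directly from the definition of $S$; taking $f=\un$ and integrating against the $P$-invariant $\mu$ gives $S^\star\mu(\X)\leqslant\mu(\X)<+\infty$. The identity $R^\star S^\star\mu=\mu$ is precisely Kac's lemma, rewritten as $\int f\di\mu=\int SRf\di\mu=\int Rf\di(S^\star\mu)=\int f\di(R^\star S^\star\mu)$ for every non-negative borelian $f$. In particular $S^\star\mu$ cannot vanish, otherwise $\mu=R^\star S^\star\mu$ would too, contradicting the hypothesis that $\mu$ is non-zero.

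For $Q$-invariance I would start from the identity $(I_d+S)Qf=(S+PQ)f$ of Proposition~\ref{proposition:relation_operateur_positive_functions}, rewrite it as $SQf+Qf=Sf+PQf$ between non-negative borelian functions, test against an indicator $f=\un_A$ so that $Qf\leqslant 1$ is $\mu$-integrable, integrate against $\mu$, and cancel $\int Qf\di\mu=\int PQf\di\mu$ using $P$-invariance of $\mu$. This yields $\int SQ\un_A\di\mu=\int S\un_A\di\mu$, i.e.\ $(Q^\star S^\star\mu)(A)=(S^\star\mu)(A)$ for every borelian $A$, hence $Q^\star S^\star\mu=S^\star\mu$ as measures.

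Finally, for absolute continuity, if $\mu(A)=0$ then using $Sf\leqslant Pf$ once more, $(S^\star\mu)(A)=\int S\un_A\di\mu\leqslant\int P\un_A\di\mu=\mu(A)=0$. The only mild obstacle throughout is keeping track of integrability when invoking the operator identities on possibly unbounded functions, but this is neatly sidestepped by reducing every measure equality to tests against indicators and extending, if desired, by monotone convergence.
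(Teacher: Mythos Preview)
Your proof is correct and follows the paper's approach for finiteness, absolute continuity, and the identity $R^\star S^\star\mu=\mu$ (all via $Sf\leqslant Pf$ and Kac's lemma). There are two points of divergence worth noting. First, your argument that $S^\star\mu\neq 0$ is cleaner than the paper's: you simply observe that $R^\star S^\star\mu=\mu\neq 0$ forces $S^\star\mu\neq 0$, whereas the paper gives a direct lower bound $nS^\star\mu(\X)\geqslant\int\prob_x(\tau\leqslant n)\di\mu(x)$ via $P^kS(1)\geqslant\prob_.(\tau=k+1)$ and lets $n\to\infty$. Second, you actually supply a proof of $Q$-invariance from the identity $(I_d+S)Qf=(S+PQ)f$ and $P$-invariance of $\mu$; the paper's own proof omits this step entirely, so your version is in fact more complete on that point.
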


\begin{proof}
First, for all non negative  $f\in\mathcal B(\X)$ and all $x\in \X$, $Sf(x)\leqslant Pf(x)$.

So, $\int Sf\di\mu\leqslant \int Pf\di\mu=\int f\di\mu$ since $\mu$ is $P-$invariant and $f$ is bounded. And this proves that $S^\star\mu$ is absolutely continuous with respect to $\mu$. So, as Fubuni's theorem proves that it is $\sigma-$additive, $S^\star\mu$ is a finite measure on $\X$.

\medskip
Moreover, we saw in lemma~\ref{lemma:Kac} that for all non negative borelian function $f$ on $\X$, $\int SR f\di \mu=\int f\di\mu$ and this proves that $R^\star S^\star\mu=\mu$. 

\medskip
Then, we need to prove that $S^\star\mu(\X)>0$.
But, for all $x\in \X$, \[P^kS(1)(x)=\esp_x S1(X_k)\geqslant \prob_x(\{\tau=k+1\})\]

So, $\sum_{k=0}^{n-1} P^kS(1)\geqslant \prob_x(\{\tau\leqslant n+1\})$.
And, as $\mu$ is $P-$invariant, taking the integral on both sides, we get that,
\[nS^\star\mu(X)\geqslant \int_{x\in\X}\prob_x(\{\tau\leqslant n\})\di\mu(x) \]
Finally, we use that for $\mu$-a.e. $x\in \X$, $\lim_n\prob_x(\tau\leqslant n) =1$ and the dominated convergence theorem, tells us that $0<\mu(\X)\leqslant \lim n S^\star\mu(\X)$, so $S^\star\mu(\X)>0$.
\end{proof}

\begin{lemma}\label{lemma:Retoile_nu}
Let $\nu$ be a non-zero $Q-$invariant borelian measure on $\X$. Then, $R^\star\nu$ is a non zero $P-$invariant measure on $\X$.

Moreover, $S^\star R^\star\nu=\nu$ and $\nu$ is absolutely continuous with respect to $R^\star\nu$.

Finally, if $QR(1)$ is bounded on $\X$, then $R^\star\nu$ is a finite measure if and only if $\nu$ is.
\end{lemma}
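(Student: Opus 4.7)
The strategy is to reduce each claim to one of the algebraic identities in Proposition~\ref{proposition:relation_operateur_positive_functions}, combined with the $Q$-invariance of $\nu$ (extended from indicators to all non-negative borelian functions by monotone convergence).

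For the identity $S^\star R^\star \nu = \nu$, I would apply $RSf = Qf$: for non-negative borelian $f$,
\[
\int Sf\di (R^\star\nu) = \int RSf\di\nu = \int Qf\di\nu = \int f\di\nu.
\]
Since $\nu \neq 0$, this also forces $R^\star \nu \neq 0$, handling the non-triviality claim at the same time.

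The central step is $P$-invariance of $R^\star \nu$. Here I would apply $Rf + Qf = f + RPf$ to $f = \un_A$ for a borelian set $A$ and integrate against $\nu$; using $Q$-invariance, $\int Q\un_A\di\nu = \nu(A)$, which yields
\[
\int R\un_A \di\nu + \nu(A) = \nu(A) + \int RP\un_A\di\nu.
\]
When $\nu(A) < \infty$ the two $\nu(A)$ terms cancel and $R^\star\nu(A) = P^\star R^\star\nu(A)$ is immediate. The main obstacle is the case $\nu(A) = \infty$, where the cancellation is forbidden. I would work around it as follows: the identity together with $R\un_A \geq \un_A$ forces $RP\un_A \geq Q\un_A$ pointwise (checked by a short case analysis on whether the four terms are finite or not), which gives $R^\star\nu(A) \geq \nu(A) = \infty$ from $R\un_A\geq\un_A$, and $P^\star R^\star\nu(A) \geq \int Q\un_A\di\nu = \infty$ from $RP\un_A\geq Q\un_A$; so both measures assign the value $+\infty$ to $A$.

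The remaining assertions are short. Absolute continuity of $\nu$ with respect to $R^\star\nu$ follows from $R\un_A \geq \un_A$: if $R^\star\nu(A) = 0$ then $R\un_A = 0$ $\nu$-a.e., hence $\un_A = 0$ $\nu$-a.e., hence $\nu(A) = 0$. For the finiteness equivalence, one direction uses $S(1) \leq P(1) = 1$ together with $\nu = S^\star R^\star\nu$ to give
\[
\nu(\X) = \int S(1)\di(R^\star\nu) \leq R^\star\nu(\X);
\]
the other uses the hypothesis that $QR(1)$ is bounded together with $Q$-invariance to give
\[
R^\star\nu(\X) = \int R(1)\di\nu = \int QR(1)\di\nu \leq \|QR(1)\|_\infty\,\nu(\X).
\]
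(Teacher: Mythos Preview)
Your proposal is correct and follows the paper's route closely for $P$-invariance, for $S^\star R^\star\nu=\nu$, and for absolute continuity (the paper likewise uses $R\un_A\geqslant\un_A$ and $RS=Q$). Your handling of the case $\nu(A)=\infty$ via the pointwise inequality $RP\un_A\geqslant Q\un_A$ is actually a bit more explicit than the paper's one-line claim that all four quantities equal $+\infty$.

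The one genuine divergence is in the finiteness equivalence. For the implication ``$\nu$ finite $\Rightarrow$ $R^\star\nu$ finite'', the paper invokes Chacon--Ornstein's ergodic theorem: it considers the Ces\`aro averages $\frac{1}{n}\sum_{k=0}^{n-1}Q^kR1$, argues that their limit $g^\star$ satisfies $\int g^\star\di\nu=\int R1\di\nu$ and is bounded by $\|QR1\|_\infty$ since all but the first term in the average are bounded by this constant, and concludes. Your argument instead applies $Q$-invariance directly to the non-negative function $R1$ to get $\int R1\di\nu=\int QR1\di\nu\leqslant\|QR1\|_\infty\,\nu(\X)$ in one line. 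Your route is more elementary, avoids importing ergodic-theoretic machinery, and sidesteps any concern about applying Chacon--Ornstein to a function not known a priori to lie in $\mathrm L^1(\nu)$. The paper's approach buys nothing extra here; yours is simply cleaner.
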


\begin{remark}
The technical assumption $QR1$ bounded on $\X$ is reasonable.

More specifically, using the same notations as in remark~\ref{example:linearly_recurrent},  we call $\Y$ linearily recurrent if $\sup_{y\in \Y} \esp_y\tau_\Y$ is finite.

In this case, $R1(x)= \esp_x\tau_\Y$ and $QR1(x) = \esp_x R1(X_{\tau_\Y}) \leqslant \sup_{y\in \Y} \esp_y\tau_\Y$ since for any $x \in \X$, $\prob_x(X_{\tau_\Y}\in \Y)=1$ be definition of $\tau_\Y$.
\end{remark}

\begin{proof} 
To prove that $R^\star\mu$ is a measure, one just have to prove that it is $\sigma-$additive. 

Let $(A_n)$ be a sequence of pairwise disjoint borelian subsets of $\X$ and $n\in\N$. As $R$ is a linear operator, we have that $\int R(\un_{\cup _{k=0}^n A_k})\di\nu= \sum_{k=0}^n \int R\un_{A_k}\di\nu$, thus, $R^\star\nu$ is finitely additive. But, according to the monotone convergence theorem, the left side of this equation converges to $\int R(\un_{\cup A_n})\di\nu$ and this finishes the proof that $R^\star\nu$ is $\sigma-$additive.

\medskip
Moreover, for all non negative $f\in\mathcal B(\X)$, $f\leqslant Rf$, so $\nu(f)\leqslant \nu(Rf)$ and $\nu$ is absolutely continuous with respect to $R^\star\nu$ and $R^\star\nu(X)>0$.

\medskip
Then, proposition~\ref{proposition:relation_operateur_positive_functions} shows that for any positive borelian function $f$, $Rf+Qf=f+RPf$. Applying this to $f=\un_A$ for some borelian set $A$, and taking the integral over $\nu$, we get that $\int R\un_A+Q\un_A\di\nu=\int \un_A+RP\un_A\di\nu$. But, $\nu$ is $Q-$invariant so if $\nu(A)$ is finite, we get that $\int R\un_A\di\nu=\int RP\un_A\di\nu$. If $\nu(A)$ is  infinite, the result still holds since in this case, $\int R\un_A\di\nu=\nu(A)=Q^\star\nu(A)=\int RP\un_A\di\nu=+\infty$. Thus, for any borelian set $A$, $R^\star \nu(A)=P^\star R^\star\nu(A)$ that is to say, $R^\star\nu$ is $P-$invariant.

\medskip
As $RS=Q$ and $\nu$ is $Q-$invariant, we directly have that $S^\star R^\star\nu=\nu$.

\medskip
For the last point, assume that $QR(1)$ is a bounded function on $\X$.

If $R^\star\nu$ is finite, then so is $\nu$ since $\nu(\X)\leqslant R^\star\nu(\X)$.

Assume that $\nu$ is finite. Then according to Chacon-Ornstein's ergodic theorem (see chapter 3 theorem 3.4 in~\cite{Kre85}), there exist a $Q-$invariant non negative borelian function $g^\star$ such that $\int g^\star\di\nu=\int R1 \di\nu$ and for $\nu-$almost every $x\in \X$,
\[\frac 1 n \sum_{k=0}^{n-1}Q^kR1(x) \xrightarrow\, g^\star(x) \]
And, since $QR$ is bounded on $\X$ and $R1(x)=\esp_x\tau$ is finite, we get that $g^\star(x)\leqslant \|QR\|_\infty$ for $\nu-$a.e $x\in \X$. So, $g\in\mathrm L^\infty(\X,\nu)\subset \mathrm L^1(\X,\nu)$ since $\nu(\X)<+\infty$ and $\int R1\di\nu\leqslant \|QR\|_\infty \nu(X)<+\infty$.
\end{proof}

We saw in the previous lemmas that $R$ ans $S$ act on invariant measure. As they are linear operators and the set of invariant measures is convex, next proposition shows that they also preserve the ergodic measures (in some sense since they do not preserve probability measures).

\begin{corollary}\label{corollaire:mesures}
Let $(X_n)$ be a Markov chain on a complete separable metric space $\X$ and $\tau$ a $\theta-$compatible stopping time such that for any $x\in\X$, $\esp_x\tau$ is finite.
Define $P$, $Q$, $R$ and $S$ as previously and assume that $QR1$ is bounded on $\X$.

\medskip
Then, $S^\star$ and $R^\star$ are reciproqual linear bijections between the $P-$invariant finite measures and the $Q-$invariant ones which preserve ergodicity.
\end{corollary}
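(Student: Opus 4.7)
The bijection part of the corollary is essentially already contained in Lemmas~\ref{lemma:Setoile_mu} and~\ref{lemma:Retoile_nu}: the first gives $R^\star S^\star \mu = \mu$ for every finite $P$-invariant measure $\mu$, and the second, using the hypothesis that $QR1$ is bounded, gives $S^\star R^\star \nu = \nu$ and finiteness of $R^\star \nu$ for every finite $Q$-invariant measure $\nu$. Linearity of $S^\star$ and $R^\star$ is immediate from the fact that $S$ and $R$ are linear operators on Borel functions. The only genuinely new content is therefore the preservation of ergodicity.

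My plan for that is to use the characterization of ergodicity as extremality: a finite invariant positive measure is ergodic if and only if it generates an extreme ray of the convex cone of finite invariant positive measures, i.e.\ whenever it is written as a sum of two such measures, both summands are proportional to it. This formulation is well adapted here because $S^\star$ and $R^\star$, being duals of positive linear operators, respect decompositions of a measure as a sum of positive measures.

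With this in hand the forward implication is short. Assume $\mu$ is a finite ergodic $P$-invariant measure and write $S^\star \mu = \nu_1 + \nu_2$ with $\nu_1,\nu_2$ finite $Q$-invariant positive measures (each $\nu_i \leqslant S^\star\mu$ is automatically finite). Applying $R^\star$ and using $R^\star S^\star \mu = \mu$ gives $\mu = R^\star\nu_1 + R^\star\nu_2$ as a sum of finite $P$-invariant positive measures, where finiteness of each $R^\star\nu_i$ follows from Lemma~\ref{lemma:Retoile_nu}. Extremality of $\mu$ then forces $R^\star\nu_i = c_i \mu$ for some $c_i \geqslant 0$, and reapplying $S^\star$ yields $\nu_i = c_i S^\star\mu$, so $S^\star\mu$ is extremal, hence $Q$-ergodic. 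The converse (preservation of $Q$-ergodicity under $R^\star$) is symmetric, using $S^\star R^\star \nu = \nu$ in place of $R^\star S^\star \mu = \mu$.

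The only delicate point I anticipate is purely one of language: handling ergodicity via invariant sets would require identifying the $\sigma$-algebras of $P$- and $Q$-invariant sets, which is inconvenient. The extreme-ray viewpoint avoids this entirely and lets the inverse relations $R^\star S^\star = I_d$ and $S^\star R^\star = I_d$, already supplied by the two preceding lemmas, carry the whole argument.
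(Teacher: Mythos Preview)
Your proposal is correct and follows essentially the same route as the paper: both invoke Lemmas~\ref{lemma:Setoile_mu} and~\ref{lemma:Retoile_nu} for the bijection, then use the extremality characterization of ergodicity together with the inverse relations $R^\star S^\star = I_d$ and $S^\star R^\star = I_d$ to transfer extremality back and forth. The only cosmetic difference is that the paper phrases extremality via convex combinations of invariant probability measures (extreme points), whereas you use additive decompositions in the cone of finite invariant measures (extreme rays).
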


\begin{proof}
We already saw in lemma~\ref{lemma:Setoile_mu} and~\ref{lemma:Retoile_nu}  that $S^\star$ (resp $R^\star$) maps the $P-$invariant (resp. $Q-$invariant) finite non zero measures onto the $Q-$invariant (resp. $P-$invariant) ones and that they are reciproqual to each-other.

\medskip
Thus, it remains to prove that the image by $S^\star$ or $R^\star$ of an ergodic measure still is ergodic. To do so, we use the linearity of $S^\star$ and $R^\star$ and that ergodic probability measures are extreme points of the set of invariant probability measures for a Markov chain in a complete separable metric space.

Let $\mu$ be a $P-$ergodic finite non zero measure. We assume without any loss of generality that $\mu$ is a probability measure. We saw in lemma~\ref{lemma:Setoile_mu} that $S^\star \mu$ is a $Q-$invariant non zero finite measure.

Assume that $S^\star\mu=S^\star\mu(\X)(t\nu_1+(1-t)\nu_2)$ where $\nu_1$ and $\nu_2$ are two $Q-$invariant probability measures and $t\in[0,1]$.

Then, we get that $\mu=R^\star S^\star\mu=S^\star \mu(X) (tR^\star\nu_1+(1-t)R^\star\nu_2)$. But $\mu$ is ergodic, so $\frac 1 {R^\star\nu_1(\X)} R^\star\nu_1=\frac 1 {R^\star\nu_2(\X)} R^\star\nu_2$. And applyting $S^\star$ again, we obtain that $\nu_1=\nu_2$, hence, $S^\star\mu$ is $Q-$ergodic.

The same proof holds to show that if $\nu$ is $Q-$ergodic, then $R^\star\nu$ is $P-$ergodic.
\end{proof}

\section{Drift functions} \label{section:drift}

In this part, $(X_n)$ is a Markov chain on complete separable metric space $\X$ and $\tau$ is a $\theta-$compatible stopping time such that for any $x\in\X$, $\esp_x\tau$ is  finite.

\medskip
We are going to study functions that we call ``Drift functions'' and that allow us to control the Markov chain. These functions are implictely defined by many authors and our main reference for their study is the book of Meyne and Tweedie~\cite{MT93}. We introduce a functional analysis point of view that will allow us to exactly link the original operator and the induced one.

\begin{definition}[Drift functions]\label{def:drift}We call drift function any borelian function $u:\X\to[1,+\infty[$ such that $u-Pu$ is bounded from below and $Qu$, $\frac{\esp\tau}{u}$ and $\frac{P(u-Pu+B)}{u-Pu+B}$ are bounded on $\X$ where $B=\sup_\X Pu-u+1$.
\end{definition}

We will explain the techniqual assumptions of this definition in remark~\ref{remark:assumptions_u}
after some other definitions. 

\begin{remark}
We assumed, in the definition of drift functions, that they are finite on $\X$. It is convenient since it avoids things like ``a point $x$ such that $u(x)$ is finite" or ``a measure $\mu$ such that $\mu(\{ u\text{ is finite}\})>0$" however, sometimes, there is a natural function $u$ which is not finite on $\X$ and we may not want to work with the (standard Borelian) space $\{x;u(x)\text{ is finite}\}$.
\end{remark}

\begin{definition}[$(\varphi,\tau)-$reccurence]\label{definition:phi_recurrence}
Let $\tau$ be a stopping time and $\varphi$ be an increasing function $\varphi:\,\N\to\R_+$ such that $\varphi(1)=1$, $\esp_x \varphi(\tau)$ is finite for all $x\in\X$ and $\liminf \frac 1 n \varphi(n)\not=0$ (the last assumption is equivalent to saying there exist $C\in\R_+^\star$ such that for any $n$ in $\N^\star$, $\varphi(n)\geqslant Cn$ therefore, $\esp\tau\leqslant \frac 1 C\esp\varphi(\tau)$).

\medskip
We say that a borelian subset $\Y$ of $\X$ is $(\varphi,\tau)-$recurrent if for all $x$ in $\X$, $\prob_x(X_\tau\in \Y)=1$ and
\[\sup_{y\in \Y} \esp_y \varphi(\tau) <\infty\]
\end{definition}

\begin{lemma}
Let $\Y$ be a $(\varphi,\tau)-$recurrent set.

Then the function $u_\varphi$ defined on $\X$ by $u_\varphi(x)=\esp_x \varphi(\tau)$ is a drift function.
\end{lemma}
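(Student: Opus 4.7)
I would verify the four requirements of Definition~\ref{def:drift} in turn, using the single ingredient $M := \sup_{y \in \Y}\esp_y \varphi(\tau)<\infty$ given by the $(\varphi,\tau)$-recurrence hypothesis (note $M\ge1$ since $\varphi(\tau)\ge \varphi(1)=1$).

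The easy items first. Since $\varphi\ge 1$ on $\N^\star$ and $\prob_x(\tau\ge 1)=1$, we have $u_\varphi:\X\to[1,+\infty[$, with borelianity inherited from the stopping time $\tau$; since $\prob_x(X_\tau\in\Y)=1$, $Qu_\varphi(x)=\esp_x u_\varphi(X_\tau)\le M$; and, extracting a constant $C>0$ with $\varphi(n)\ge Cn$ from $\liminf \varphi(n)/n\ne 0$, one gets $u_\varphi\ge C\esp_x\tau$, so $\esp\tau/u_\varphi\le 1/C$.

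The heart of the proof is a decomposition, via the Markov property and the $\theta$-compatibility of $\tau$, of $Pu_\varphi(x)=\esp_x \varphi(\tau\circ\theta)$ along the partition $\{\tau=1\}\cup\{\tau\ge 2\}$. On $\{\tau\ge 2\}$ the $\theta$-compatibility gives $\tau\circ\theta=\tau-1$, and on $\{\tau=1\}$ the event is $\sigma(X_0,X_1)$-measurable with $X_1\in\Y$, so the Markov property at time $1$ yields $\esp_x[\varphi(\tau\circ\theta)\un_{\tau=1}]=\esp_x[u_\varphi(X_1)\un_{\tau=1}]\le M\prob_x(\tau=1)$. Combining these with $\varphi(1)=1$ and the monotonicity of $\varphi$,
\[
u_\varphi(x)-Pu_\varphi(x) \ge \esp_x\bigl[(\varphi(\tau)-\varphi(\tau-1))\un_{\tau\ge 2}\bigr] + (1-M)\prob_x(\tau=1) \ge 1-M,
\]
which gives the required boundedness from below and shows that $B=\sup(Pu_\varphi-u_\varphi)+1$ is a finite constant bounded by $M$.

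For the last condition, I would set $v=u_\varphi-Pu_\varphi+B\ge 1$ and iterate the same procedure with $\theta^2$ in place of $\theta$: on $\{\tau\ge 3\}$ one has $\tau\circ\theta^2=\tau-2$, while on $\{\tau\le 2\}$ the Markov property at time $\tau$ together with $X_\tau\in\Y$ gives finitely many boundary terms each controlled by $M$. This expresses $Pv-v$ as a second-difference expression of the form $\esp_x\bigl[(\varphi(\tau)-2\varphi(\tau-1)+\varphi(\tau-2))\un_{\tau\ge 3}\bigr]$ plus bounded contributions, and since $v\ge 1$ everywhere, showing that this quantity is dominated by a constant times $v$ yields the bounded ratio. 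The main obstacle is precisely this last step, namely absorbing the second-difference term into a multiple of $v$: for the $\varphi$ arising in applications (for instance $\varphi(n)=n^{1+\alpha}$) the second difference is of strictly lower order than the first and is trivially controlled, but in general this is where the structural information on $\varphi$ that is implicit in the setting of the lemma must be exploited.
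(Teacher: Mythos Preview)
Your argument for the first three requirements of Definition~\ref{def:drift} is exactly what the paper does: decompose $Pu_\varphi$ along $\{\tau=1\}\cup\{\tau\ge 2\}$, use $\theta$-compatibility on $\{\tau\ge 2\}$ to obtain $\varphi(\tau-1)$, bound the $\{\tau=1\}$ contribution by $Su_\varphi\le Qu_\varphi\le M$ (the paper writes $b_\varphi=1+M$), and use monotonicity of $\varphi$ to conclude $u_\varphi-Pu_\varphi+b_\varphi\ge 1$. The control $\esp\tau\le C^{-1}u_\varphi$ via $\liminf\varphi(n)/n>0$ is likewise the intended route.

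The point you flag as the ``main obstacle''---the fourth requirement that $P(u_\varphi-Pu_\varphi+B)/(u_\varphi-Pu_\varphi+B)$ be bounded---is genuinely not settled by your sketch, and you are right that the second-difference computation does not close it for arbitrary $\varphi$. In fact the paper's own proof does not verify this condition either: it stops immediately after establishing $1\le u_\varphi-Pu_\varphi+b_\varphi$ and declares $u_\varphi$ a drift function. So the gap you have located is real and is shared by the paper. For the specific $\varphi$ singled out just after the lemma ($n$, $n^2$, $a^{1-n}$) the second differences of $\varphi$ are controlled by the first differences and your outline would then go through; but as a statement about an arbitrary increasing $\varphi$ with $\varphi(1)=1$ and $\liminf\varphi(n)/n>0$, neither your argument nor the paper's supplies a verification of the fourth condition.
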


\begin{proof}
Let $b_\varphi=1+\sup_{y\in \Y} \esp_y \varphi(\tau)$.

We note $u_\varphi$, the function $u_\varphi(x)=\esp_x \varphi(\tau)$ and we define $R$ and $S$ as in~\S\ref{section:induction}.

By definition, if $\Y$ is $(\varphi,\tau)-$recurrent then, $Su_\varphi\leqslant Qu_\varphi\leqslant b_\varphi$. 

Moreover, \[
Pu_\varphi(x)=\int_{\{\tau\geqslant 2\}} \varphi(\tau-1){\rm d}\prob_x + Su_\varphi(x)\leqslant u_\varphi(x)+b_\varphi-1\] since $\varphi$ is non decreasing. Hence, $1\leqslant u_\varphi-Pu_\varphi+b_\varphi$ and $u_\varphi$ is a drift function.
\end{proof}

\begin{definition} With the same notations as in the previous definition, if $\varphi(n)=n$ (resp. $\varphi(n)=n^2$, $\varphi(n)=a^{1-n}$ for some $a\in]0,\,1[$), we say that $\Y$ is linearly (resp. quadratically,  exponentially) reccurent and that $u_\varphi$ is a linear (resp. quadratic, exponential) drift function.

If there can be no confusion on the stopping time, we simply say that $\Y$ is $\varphi-$recurrent.
\end{definition}

\begin{definition}
For a borelian function $v:\X\to[1,\infty[$ and $p\in[1,+\infty[$, we note 
\[\mathcal F_v^p(\X)= \left\{f:X\to\R;\; f\textnormal{ is borelian and }\sup_{x\in\X}\frac{|f|^p(x)}{v(x)}\textnormal{ is finite}
\right\}\]
This clearly is a vectorial space and we endow it with a norm, setting, for $f\in \mathcal F_v^p(\X)$,
\[
\|f\|_{\mathcal F_v^p(\X)}=\sup_{x\in\X}\frac{|f(x)|}{v(x)^{1/p}}
\]
Thus, $\mathcal F_v^p(\X)$ is a Banach space which countains borelian bounded functions on $\X$.
\end{definition}
\begin{remark}
The use of this space is that, when $v$ is a drift function, we are be able to deal with functions in $\mathcal F_v^p(\X)$ as if they were bounded.

More specifically, with use the drift condition to have a control on what happens during the walk for functions of $\lu$.
\end{remark}
\begin{remark}\label{remark:Lu_Lv}
Let $v_1,v_2:\X\to[1,\infty[$ be two borelian functions. Then $v_1\in\mathcal F_{v_2}^1(\X)$ if and only if $I_d:\, \mathcal F_{v_1}^1(\X)\to \mathcal F_{v_2}^1(\X)$ is continuous.

Thus, if $u:\X\to[1,+\infty[$ is a borelian function such that $Pu-u$ is bounded from above, we note $B_u=\sup Pu-u+1$ and we clearly have that $1\leqslant u-Pu+B_u$.

Morover, if $B'\geqslant B_u$, we get that $1\leqslant u-Pu+B_u\leqslant u-Pu+B'\leqslant (1+B')(u-Pu+B_u)$. Therefore, $\mathcal F_{u-Pu+B_u}^p$ and $\mathcal F^p_{u-Pu+B'}$ are isomorphic as Banach spaces.
\end{remark}

\begin{definition}
Let $u:\X\to[1,+\infty[$ be a drift function.

We note $B_u=\sup Pu-u+1$ and we set, for $p\in[1,+\infty[$, $\eupp(\X)=\mathcal F^p_{u-Pu+B_u}(\X)$.
\end{definition}

\begin{lemma}
Let $u$ be a drift function. The three following assertions are equivalent :
\begin{itemize}
\item The spaces $\lu$ and $\eup$ are isomorphic
\item There is $a\in[0,1[$ and $b\in\R$ such that $Pu\leqslant au+b$
\item $u\in \eup$
\end{itemize}
\end{lemma}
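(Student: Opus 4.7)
The plan is to establish (i) $\iff$ (iii) as a direct application of Remark~\ref{remark:Lu_Lv}, and then to cycle (iii) $\Rightarrow$ (ii) $\Rightarrow$ (iii) by elementary algebraic manipulation of the inequality $u \leq M(u - Pu + B_u)$. Nothing looks genuinely hard; the only bookkeeping is a case split on the sign of $b - B_u$ in the last implication.

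For (i) $\iff$ (iii), I would apply Remark~\ref{remark:Lu_Lv} in both directions: the spaces $\lu$ and $\eup = \mathcal F^1_{u - Pu + B_u}$ are isomorphic as Banach spaces iff the identity is continuous both ways, i.e.\ iff $u \in \eup$ and $u - Pu + B_u \in \lu$ hold simultaneously. The second inclusion is automatic, since $Pu \geq 0$ and $u \geq 1$ give $u - Pu + B_u \leq u + B_u \leq (1 + B_u)\, u$. So (i) reduces to $u \in \eup$, which is (iii).

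For (iii) $\Rightarrow$ (ii), the hypothesis produces $M > 0$ with $u \leq M(u - Pu + B_u)$ pointwise on $\X$. Replacing $M$ by $\max(M,1)$ if necessary (which only strengthens the inequality since $u - Pu + B_u \geq 1$), I may assume $M \geq 1$. Rearranging then gives $Pu \leq \frac{M-1}{M}\, u + B_u$, so $a = (M-1)/M \in [0,1)$ and $b = B_u$ satisfy (ii).

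For (ii) $\Rightarrow$ (iii), starting from $Pu \leq au + b$ with $a \in [0,1)$ and $b \in \R$, I would rewrite this as $u - Pu + B_u \geq (1-a)\, u - (b - B_u)$ and split $\X$ into $\{u \leq K\}$ and its complement, with $K := \max\bigl(1,\, 2(b - B_u)^+/(1-a)\bigr)$. On the complement, $(1-a)u - (b - B_u) \geq (1-a)u/2$, bounding $u/(u - Pu + B_u)$ by $2/(1-a)$; on $\{u \leq K\}$ the uniform lower bound $u - Pu + B_u \geq 1$ gives $u/(u - Pu + B_u) \leq K$. Hence $u/(u - Pu + B_u)$ is bounded on $\X$, which is $u \in \eup$, i.e.\ (iii).
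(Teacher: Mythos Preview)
Your proof is correct and follows the same structure as the paper's: (i) $\iff$ (iii) via Remark~\ref{remark:Lu_Lv}, then an algebraic cycle between (ii) and (iii). The only notable difference is in (ii) $\Rightarrow$ (iii): the paper avoids your case split entirely by observing directly that $Pu \leqslant au + b$ gives $(1-a)u \leqslant u - Pu + b$, and then invoking Remark~\ref{remark:Lu_Lv} (the spaces $\mathcal F^1_{u-Pu+B_u}$ and $\mathcal F^1_{u-Pu+b}$ are isomorphic whenever both make sense) to conclude $u \in \eup$; your threshold argument works but is more laborious than necessary.
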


\begin{proof}
That the first and the third assertions are equivalent follows from remark~\ref{remark:Lu_Lv}.

Moreover, if there is $a\in[0,1[$ and $b\in \R$ such that $Pu\leqslant au+b$, then $(1-a)u\leqslant u-Pu+b$ and so, $u\in \eup$.

In the same way, if $u\in\eup$, then write $u\leqslant \|u\|_{\eup}\left(u-Pu+B_u\right)$ and this means, since $\|u\|_{\eup}\not=0$, that 
\[Pu \leqslant \left(1-\frac1{\|u\|_{\eup}}\right)u + B_u\]
which finishes the proof.

Actually, we also proves that if $u\in\eup$ and $\|u\|_{\eup} \geqslant 1$, then $Pu$ is bounded.
\end{proof}

From now on, we fix a drift function $u$ (we assume that one exists).

\begin{remark}
To simplify the lecture of this section, the reader may think of $u$ as being an exponential drift function and thus, forget about the difference between $\eup$ and $\lu$. He may also fix $p=1$.
\end{remark}

\begin{lemma}
\label{lemma:Rcontinu}
The operator $R$ is continuous from $\eup(\X)$ to $\lu(\X)$.
\end{lemma}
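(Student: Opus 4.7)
The plan is to reduce the continuity of $R$ to a pointwise estimate. By the very definition of the norm on $\eup(\X)=\mathcal F^1_{u-Pu+B_u}(\X)$, any $f\in\eup$ satisfies
\[
|f(x)|\leqslant \|f\|_{\eup}\bigl(u(x)-Pu(x)+B_u\bigr)
\]
and the right-hand side is non-negative (indeed $\geqslant 1$) by the definition $B_u=\sup(Pu-u)+1$. Splitting $f=f^+-f^-$ and using the positivity of $R$, each piece has finite $R$-image provided $R(u-Pu+B_u)$ is finite, and then
\[
|Rf(x)|\leqslant R|f|(x)\leqslant \|f\|_{\eup}\bigl(Ru(x)-RPu(x)+B_u\,R1(x)\bigr).
\]

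The key step is then to rewrite $Ru-RPu$ using the operator identities of Proposition~\ref{proposition:relation_operateur_positive_functions}. Applied to the non-negative function $u$, the relation $(R+Q)u=(I_d+RP)u$ gives $Ru-RPu=u-Qu$, and since $Qu\geqslant 0$, this is $\leqslant u$. For the remaining term, $R1(x)=\esp_x\tau$ by definition of $R$, and one of the standing assumptions on drift functions is precisely that $\esp\tau/u$ is bounded on $\X$; write $C_\tau=\|\esp\tau/u\|_\infty$.

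Combining these two observations yields, for every $x\in\X$,
\[
|Rf(x)|\leqslant \|f\|_{\eup}\bigl(u(x)+B_uC_\tau u(x)\bigr)=\|f\|_{\eup}\bigl(1+B_uC_\tau\bigr)u(x),
\]
which is the required estimate $\|Rf\|_{\lu}\leqslant (1+B_uC_\tau)\|f\|_{\eup}$. This proves the continuity of $R\colon\eup(\X)\to\lu(\X)$.

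There is no genuine obstacle here: the only point deserving a line of care is the use of Proposition~\ref{proposition:relation_operateur_positive_functions}, whose statement is written for non-negative functions, but $u-Pu+B_u\geqslant 1$ and $u\geqslant 1$, so the identities are applied to legitimately non-negative inputs. Everything else is bookkeeping about the definitions of $\eup$, $B_u$, and the drift hypothesis.
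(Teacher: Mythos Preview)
There is a genuine gap. You apply Proposition~\ref{proposition:relation_operateur_positive_functions} to $u$ to obtain $Ru+Qu=u+RPu$, and then rearrange to $Ru-RPu=u-Qu$. But this rearrangement is only legitimate if $Ru(x)$ (equivalently $RPu(x)$) is finite, and nothing in the hypotheses guarantees this: the drift assumptions give finiteness of $u$, of $Qu$, and of $R1=\esp\tau$, but not of $Ru(x)=\esp_x\sum_{k=0}^{\tau-1}u(X_k)$. When $Ru(x)=+\infty$ the identity from the proposition reads $+\infty=+\infty$ and tells you nothing. The same issue already appears one line earlier, where you split $R(u-Pu+B_u)$ as $Ru-RPu+B_uR1$: $R$ is defined for non-negative functions and is additive on them, but $u-Pu$ changes sign, so this decomposition again presupposes finiteness of the pieces.

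The paper confronts exactly this point. It notes that the identity $R(u-Pu)=u-Qu$ follows from the proposition \emph{only when $Ru(x)$ is finite}, and then supplies a separate argument to get the formula $R(u-Pu+B_u)(x)=u(x)-Qu(x)+B_u\esp_x\tau$ unconditionally: one writes $A_n=\sum_{k=0}^{n-1}\bigl(u(X_k)-Pu(X_k)+B_u\bigr)$, observes that $M_n=A_n-u(X_0)+u(X_n)-nB_u$ is a martingale, applies optional stopping at $\min(\tau,n)$ to get $\esp_xA_{\min(\tau,n)}=u(x)-\esp_xu(X_{\min(\tau,n)})+B_u\esp_x\min(\tau,n)$, and passes to the limit by monotone convergence (the summands $u-Pu+B_u\geqslant1$ are non-negative). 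Once this identity is established, your final bound $\|Rf\|_{\lu}\leqslant(1+B_u\|\esp\tau\|_{\lu})\|f\|_{\eup}$ is exactly the one the paper obtains. So your outline is correct; what is missing is precisely this martingale step that replaces the illicit cancellation $Ru-RPu$.
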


\begin{proof}
Let $f\in \eup(\X)$ and $x\in\X$, $R$ is a positive operator, so
\[|Rf(x)| \leqslant R|f|(x) \leqslant\|f\|_{\eup}R(u-Pu+B_u)\]
But proposition~\ref{proposition:relation_operateur_positive_functions} shows that if $Ru(x)$ is finite, then $R(u-Pu)(x)=u(x)-Qu(x)$ (because $u$ and $Qu$ are assumed to be finite on $\X$). We are going to prove that this relation holds even if $Ru(x)$ is not finite. This will be enough to conclude because $Qu$ is bounded, $\esp\tau\leqslant Cu$ for some $C\in\R$ (by definition of $u$) and because of remark~\ref{remark:Lu_Lv}.

Let $A_n=\sum_{k=0}^{n-1} \left(u(X_k)-Pu(X_k)+B_u\right)$.

We assumed that $u-Pu+B_u\geqslant 1$, so $A_{n+1}-A_n=u(X_n)-Pu(X_n)+B_u\geqslant 0$.\\
Let $M_n=A_n-u(X_0)+u(X_n)-nB_u=\sum_{k=0}^{n-1} u(X_{k+1})-Pu(X_k)$, $M_n$ is a martingale.
Moreover, $\esp_x M_1=0$ and so $\esp_x (A_{min(n,\tau)})=u(x)-\esp_x u(X_{min(\tau,n)})+B_u\esp_x min(\tau,n)$.

Therefore, according to the monotone convergence theorem,
\[R(u-Pu+B_u)(x)=\esp_x A_\tau = u(x)-Qu(x)+B_u\esp_x\tau\]
Hence, $|Rf(x)|\leqslant \|f\|_{\eup}(u(x)-Qu(x)+B_u\esp_x\tau)\leqslant  \|f\|_{\eup}(1+B_u\|\esp\tau\|_{\lu})u(x)$ since $u$ is positive and $\esp\tau\in\lu(\X)$ by assumption.

So, we finally get that $\|Rf\|_{\lu}\leqslant (1+B_u\|\esp\tau\|_{\lu})\|f\|_{\eup}$ and this finishes the proof of the lemma.
\end{proof}

\begin{remark}\label{remark:assumptions_u}
Our assumptions in the definition of drift functions exactly say that if $u$ is one, then $\lu$ and $\eup$ are defined, $P:\eup(\X)\to\eup(\X)$, $Q:\lu(\X)\to\mathcal B(\X)$ are continuous and finally, $R(1)\in\lu$.
\end{remark}

\begin{lemma}\label{lemma:Scontinu}
The operator $S$ is continuous from $\lu$ to $\mathcal B(\X)$.
\end{lemma}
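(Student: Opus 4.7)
The plan is very short because the bound falls out directly from the definition of a drift function. The strategy is: dominate $|f|$ by $u$ using the $\mathcal F^1_u$--norm, push this through the positive operator $S$, then compare $Su$ to $Qu$ which is bounded by hypothesis.

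More precisely, I would first fix $f\in\lu(\X)$ and observe that by definition of the norm on $\mathcal F^1_u$ we have the pointwise bound $|f(y)|\leqslant \|f\|_{\lu}\,u(y)$ for every $y\in\X$. Since $S$ is a positive operator (it is defined as integration of a non-negative function against $\prob_x$), this pointwise domination passes through: for every $x\in\X$,
\[
|Sf(x)|\;\leqslant\;S|f|(x)\;\leqslant\;\|f\|_{\lu}\,Su(x).
\]

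The second step is to compare $Su$ with $Qu$. For any non-negative borelian function $g$, the event $\{\tau=1\}$ is contained in $\{\tau<+\infty\}$, and on $\{\tau=1\}$ we have $X_1=X_\tau$, so directly from the definitions~\eqref{eqn:S} and of $Q$,
\[
Sg(x)=\int_{\{\tau=1\}} g(X_\tau)\di\prob_x((X_n))\;\leqslant\;\int_{\{\tau<+\infty\}} g(X_\tau)\di\prob_x((X_n))=Qg(x).
\]
Applying this with $g=u$ (which is non-negative since $u\geqslant 1$) gives $Su\leqslant Qu$ everywhere on $\X$.

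The third step is to invoke the definition of a drift function: $Qu$ is assumed to be bounded on $\X$. Combining the two inequalities,
\[
\|Sf\|_\infty\;\leqslant\;\|f\|_{\lu}\,\|Su\|_\infty\;\leqslant\;\|Qu\|_\infty\,\|f\|_{\lu},
\]
which is exactly the continuity of $S:\lu(\X)\to\mathcal B(\X)$ with operator norm at most $\|Qu\|_\infty$. There is no real obstacle here; the only thing to be careful about is to make sure the positivity argument for $|Sf|\leqslant S|f|$ is justified (which is immediate from the integral definition of $S$ and the triangle inequality for integrals).
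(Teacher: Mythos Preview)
Your proof is correct and essentially identical to the paper's own argument: both use positivity of $S$ to get $|Sf|\leqslant S|f|\leqslant \|f\|_{\lu}Su$, then the pointwise inequality $Su\leqslant Qu$ (valid since $\{\tau=1\}\subset\{\tau<+\infty\}$ and $X_1=X_\tau$ there), and conclude via the boundedness of $Qu$ from the definition of a drift function. Your write-up even spells out the comparison $Sg\leqslant Qg$ a bit more explicitly than the paper does.
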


\begin{proof}
First, we remark that $S$ is a positive operator and that for any non negative borelian function $g$, $Sg\leqslant Qg$. Therefore, if  $g\in\lu$,
\[|Sg|\leqslant S(|g|)\leqslant \|g\|_{\lu} Su
 \leqslant \|g\|_{\lu}Qu\leqslant \|g\|_{\lu}\|Qu\|_\infty\]
So, $Sg$ is bounded and $\|Sg\|_{\infty}\leqslant \|g\|_{\lu}\|Qu\|_\infty$.
\end{proof}

\begin{proposition} \label{proposition:relations_operateurs}
For any $f\in \eup(\X)$ and $g\in\lu(\X)$, we have
\begin{flalign*}
R(I_d-P)f &= (I_d-Q)f  \\
 (I_d-P)Rf &= (I_d-SR)f \\
(I_d-P)Q g&= S(I_d-Q)g  \\   
RSg  &= Qg
\end{flalign*}
\end{proposition}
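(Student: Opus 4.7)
The plan is to derive the four identities from Proposition \ref{proposition:relation_operateur_positive_functions}, which established analogous identities for non-negative borelian functions, by splitting a general $f$ (or $g$) into its positive and negative parts. Algebraically the four identities here are already present in that earlier proposition: for instance, $(R+Q)f = (I_d+RP)f$ rearranges into $R(I_d-P)f = (I_d-Q)f$, and the remaining three come in the same way from $(I_d+PR)f=(I_d+S)Rf$, $(I_d+S)Qf=(S+PQ)f$ and $RSf=Qf$. The work is therefore to justify the rearrangement, which is legitimate pointwise as soon as every term involved is finite.

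For $f\in\eup$, I would write $f=f^+-f^-$ with $f^\pm\in\eup$ non-negative, apply Proposition \ref{proposition:relation_operateur_positive_functions} to each of them, and subtract. This requires the pointwise finiteness of $Rf^\pm$, $Qf^\pm$, $Pf^\pm$, $RPf^\pm$, $PRf^\pm$ and $SRf^\pm$. Lemma \ref{lemma:Rcontinu} puts $Rf^\pm$ into $\lu$; the drift condition built into the definition of $u$ gives $P:\eup\to\eup$ directly, and it also yields $Pu\le u+B_u-1\le B_u\,u$ (since $u\ge 1$), so $P$ maps $\lu$ into $\lu$, which handles $Pf^\pm$ and $PRf^\pm$. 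Next, $Qf^\pm$ is bounded since $Qu$ is (from $f^\pm\le \|f^\pm\|_{\eup}(u-Pu+B_u)$ and $QPu\le B_u\,Qu$), and $SRf^\pm$ is bounded by Lemma \ref{lemma:Scontinu}. Hence the subtraction is legitimate and produces the first two identities.

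For $g\in\lu$ the argument is entirely parallel: writing $g=g^+-g^-$ with $g^\pm\ge 0$ in $\lu$, one checks by the same kind of estimates that $Sg^\pm$, $Qg^\pm$, $PQg^\pm$, $SQg^\pm$ and $RSg^\pm$ are all finite, in fact bounded (for $RSg^\pm$ one may either invoke Lemma \ref{lemma:Rcontinu} after noting $Sg^\pm\in\eup$ because bounded functions are in $\eup$, or simply use the identity $RSg^\pm=Qg^\pm$ from Proposition \ref{proposition:relation_operateur_positive_functions} itself). Subtracting then yields the last two identities.

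The only mildly delicate point is that $P$ is a priori given only on bounded borelian functions and on $\eup$, not obviously on $\lu$; it is the bound $Pu\le B_u u$ extracted from the drift condition that extends it. Beyond that there is no real obstacle: the entire content of the proposition is that the operators $R(I_d-P)$, $(I_d-P)R$, $(I_d-P)Q$ and $RS$ act between the expected spaces, and this is guaranteed by Lemmas \ref{lemma:Rcontinu} and \ref{lemma:Scontinu} together with the four assumptions packaged into the definition of a drift function.
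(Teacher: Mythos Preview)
Your proposal is correct and is exactly the approach the paper takes: the paper's own proof is a single sentence observing that the identities follow from Proposition~\ref{proposition:relation_operateur_positive_functions} once Lemmas~\ref{lemma:Rcontinu} and~\ref{lemma:Scontinu} (together with the drift assumptions) guarantee that every term involved is finite. Your write-up simply makes explicit the positive/negative decomposition and the individual finiteness checks that the paper leaves implicit.
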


\begin{proof}
This is just a consequence of proposition~\ref{proposition:relation_operateur_positive_functions} and the previous lemmas which shows that all the functions $Rf$, $RPf$, $f$, $Qf$, etc. are finite.
\end{proof}

\begin{remark}\label{remark:Q}
As $Q=RS$ in $\lu$, $Q$ acts on $\lu / \ker(S)$ and it's spectrum in $\lu$ is the same as the one in $\lu/Ker(S)$.

So, if we can solve $Rf=g-Qg$ in $\lu/Ker(S)$ we may not solve it in $\lu$ but the functions $Sg$ and $Qg$ will be defined anyway.

Note that, if $\Y$ is a measurable recurrent subset of $\X$ and $\tau$ is the time of first return to $\Y$, then $Ker(S)=\{g\in\lu;\; g=0\text{ on }\Y\}$.
\end{remark}

\begin{remark}\label{remark:poisson}
Let $f\in\eup$, if there exist a bounded function $g\in\lu/\ker S$ such that $g-Qg=Rf$, then $\hat g=Rf+Qg$ is well defined as we saw in the previous remark.

Moreover, it satisfies $\hat g-P\hat g=f$. This is a direct computation using the relations of proposition~\ref{proposition:relations_operateurs} :
\begin{flalign*}
Rf+Qg-P(Rf+Qg)&=(I_d-P)Rf+(I_d-P)Qg = (I_d-SR)f+S(I_d-Q)g\\
&= f + S(g-Qg-Rf)=f
\end{flalign*}
This, proves that if we can solve Poisson's equation for the induced chain, we can find solutions for the original chain. 

Finally, $Rf+Qg\in\lu$.
\end{remark}

The next two lemmas proves that the spaces $\eupp$ and $\lup$ are really close (in some sense) to the spaces of functions that are integrable against the stationnary measures of the original and the induced random walk.

\begin{lemma}\label{lemma:extmesures}
Let $\mu$ be a $P-$invariant non zero finite measure on $\X$.

Then, $I_d:\eupp(\X)\to \mathcal L^p(\X,\mu)$ is continuous.
\end{lemma}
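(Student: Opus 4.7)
The plan is to reduce the continuity of the inclusion to showing that the weight $u - Pu + B_u$ is itself integrable against $\mu$. Indeed, by definition of $\eupp$, any $f \in \eupp(\X)$ satisfies $|f(x)|^p \leq \|f\|_{\eupp}^p (u(x) - Pu(x) + B_u)$ pointwise, so once we know that $\int_\X (u - Pu + B_u)\,\di\mu < +\infty$, integrating this inequality gives
\[
\|f\|_{\mathcal L^p(\mu)}^p \leq \|f\|_{\eupp}^p \int_\X (u - Pu + B_u)\,\di\mu,
\]
which is exactly the desired continuity, with operator norm $\bigl(\int_\X (u - Pu + B_u)\,\di\mu\bigr)^{1/p}$.

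To bound $\int_\X (u - Pu + B_u)\,\di\mu$, I would apply Kac's lemma (Lemma~\ref{lemma:Kac}) to the non-negative borelian function $f_0 = u - Pu + B_u$. The hypothesis of Kac's lemma, namely $\prob_x(\tau \geq n) \to 0$ for $\mu$-a.e.\ $x$, follows from Markov's inequality since $\esp_x\tau$ is assumed finite for every $x \in \X$. Kac's lemma then yields
\[
\int_\X (u - Pu + B_u)\,\di\mu = \int_\X SR(u - Pu + B_u)\,\di\mu.
\]

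The next step is to show that $SR(u - Pu + B_u)$ is a bounded function, which combined with $\mu(\X) < +\infty$ will make the right-hand side finite. For this I would reuse the telescoping martingale computation already carried out inside the proof of Lemma~\ref{lemma:Rcontinu}: it gives the explicit identity $R(u - Pu + B_u) = u - Qu + B_u \esp_\cdot\tau$ on all of $\X$. Since $Qu$ is bounded and $\esp_\cdot \tau/u$ is bounded (both by the definition of a drift function), we get $R(u - Pu + B_u) \in \lu$. Lemma~\ref{lemma:Scontinu} then says $S$ maps $\lu$ continuously into bounded functions, so $SR(u - Pu + B_u)$ is indeed bounded on $\X$.

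Putting the pieces together gives $\int_\X (u - Pu + B_u)\,\di\mu \leq \mu(\X)\,\|SR(u - Pu + B_u)\|_\infty < +\infty$, and hence the continuity of $I_d : \eupp(\X) \to \mathcal L^p(\X,\mu)$ with an explicit norm control. There is no genuine obstacle here: the statement is essentially a packaging of Kac's lemma together with the continuity estimates for $R$ and $S$ already established; the only point that needs a small verification is the applicability of Kac's lemma to a function ($u - Pu + B_u$) which need not be bounded or $\mu$-integrable a priori — but this is exactly what Kac's lemma is designed to handle for non-negative borelian functions.
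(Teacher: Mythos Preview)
Your argument is correct but takes a genuinely different route from the paper. The paper does not invoke Kac's lemma or the induction machinery at all: instead it bounds the Ces\`aro averages directly via the telescoping identity
\[
\frac{1}{n}\sum_{k=0}^{n-1} P^k(u-Pu+B_u) = \frac{1}{n}(u-P^nu)+B_u \leq \frac{u}{n}+B_u,
\]
and then appeals to the Chacon--Ornstein ergodic theorem to identify the pointwise limit with a bounded function whose $\mu$-integral equals $\int|f|^p\,\di\mu$. This yields the clean constant $B_u^{1/p}$ (for $\mu$ a probability measure) and, importantly, uses only $P$ and the drift inequality --- no stopping time is needed, which is why the same template also proves the companion lemma~\ref{lemma:extmesures_Qinvar} for $Q$. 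Your approach, by contrast, routes everything through the stopping time via Kac's lemma and the already-established continuity of $R$ and $S$; this is perfectly valid in the paper's setup (where $\esp_x\tau$ is assumed finite for all $x$) and has the virtue of recycling the paper's own tools, but it makes the lemma look dependent on the induction structure when in fact it is a statement purely about $P$, $u$, and $\mu$.
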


\begin{proof}
{(we use the same idea as Benoist and Quint in~\cite{BQstat3} Lemma 3.8)}~

We assume without any loss of generality that $\mu$ is a probability measure.

\medskip
Let $f\in \eupp$, $x\in \X$ and $n\in\N^\star$, by definition of $\eupp$, $|f|^p(x)\leqslant \|f\|_{\eupp}^p (u-Pu+b)(x)$, so
\[\frac 1 n \sum_{k=0}^{n-1}P^k(|f|^p)(x)\leqslant \frac{\|f\|_{\eupp}^p} n(u-P^nu+nb)\leqslant\|f\|_{\eupp} ^p(\frac 1 n u(x)+b)\]
Then according to Chacon-Ornstein's ergodic theorem (see chapter 3 theorem 3.4 in the book of Krengel~\cite{Kre85}), there exist a $P-$invariant non negative borelian function $f^\star$ such that $\int |f|^p\di\mu=\int f^\star \di\mu$ and for $\mu-$almost every $x\in \X$,
\[\frac 1 n \sum_{k=0}^{n-1}P^k|f|^p(x) \xrightarrow\, f^\star(x) \]
And, since $u(x)$ is finite for all $x\in \X$, we get that $f^\star(x)\leqslant b\|f\|_{\eupp}^p$ for $\mu-$a.e $x\in \X$. So, $f^\star\in\mathrm L^\infty(\X,\mu)\subset \mathrm L^1(\X,\mu)$ since $\mu(\X)<+\infty$, $f\in\mathcal L^p(\X,\mu)$ and $\|f\|_{\mathcal L^p(\X,\mu)}\leqslant b^{1/p}\|f\|_{\eupp}$.
\end{proof}

\begin{lemma}\label{lemma:extmesures_Qinvar}
Let $\nu$ be a $Q-$invariant non zero finite measure on $\X$.

Then, $I_d: \lup(\X)\to \mathcal L^p(\X,\nu)$ is continuous.
\end{lemma}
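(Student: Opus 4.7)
The plan is to mimic essentially verbatim the proof of Lemma~\ref{lemma:extmesures}, with the operator $Q$ and the measure $\nu$ replacing $P$ and $\mu$. What makes matters even simpler in the induced setting is that by definition of a drift function, $Qu$ is already bounded on $\X$, which provides the natural substitute for the role played by ``$u-Pu+b$'' in the previous lemma.

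First, by definition of $\lup$ we have the pointwise bound $|f(x)|^p \leqslant \|f\|_{\lup}^p\, u(x)$ for every $x\in\X$. So the lemma will follow once we establish $u\in \mathcal L^1(\X,\nu)$ with a bound of the form $\int u\,\di\nu \leqslant \|Qu\|_\infty\, \nu(\X)$. Since $Qu\leqslant \|Qu\|_\infty$, iterating gives $Q^k u\leqslant \|Qu\|_\infty$ for every $k\geqslant 1$, hence
\[
\frac{1}{n}\sum_{k=0}^{n-1} Q^k u(x) \;\leqslant\; \frac{u(x)}{n} + \frac{n-1}{n}\|Qu\|_\infty.
\]

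I would then apply Chacon--Ornstein's ergodic theorem (chapter 3, theorem 3.4 in~\cite{Kre85}) to the Markov operator $Q$ and the $Q$-invariant finite measure $\nu$: there exists a $Q$-invariant non-negative borelian function $u^\star$ with $\int u\,\di\nu = \int u^\star\,\di\nu$ and the Cesàro averages above converging to $u^\star(x)$ for $\nu$-a.e.\ $x$. Passing to the limit in the displayed inequality, and using that $u(x)<+\infty$ for every $x\in\X$ (so $u(x)/n\to 0$ pointwise), we obtain $u^\star\leqslant \|Qu\|_\infty$ $\nu$-a.e., whence $\int u\,\di\nu\leqslant \|Qu\|_\infty\,\nu(\X)<+\infty$. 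Combining with the pointwise bound yields $\|f\|_{\mathcal L^p(\X,\nu)}\leqslant (\|Qu\|_\infty\,\nu(\X))^{1/p}\,\|f\|_{\lup}$, which is the asserted continuity.

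There is no genuine obstacle here: the clause ``$Qu$ bounded'' in the definition of drift function is tailor-made for this argument, and the upgrade from the Cesàro bound to integrability via Chacon--Ornstein is identical to the one used in Lemma~\ref{lemma:extmesures}. The only mild point of attention is the finiteness of $u$ everywhere on $\X$, which is exactly what justifies discarding the $u(x)/n$ term in the passage to the limit.
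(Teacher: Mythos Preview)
Your proof is correct and takes essentially the same approach as the paper: both use that $Qu$ is bounded to control the Ces\`aro averages $\frac{1}{n}\sum_{k=0}^{n-1} Q^k u(x)$ and then invoke Chacon--Ornstein together with the finiteness of $u$ to conclude. The paper applies Chacon--Ornstein to $|g|^p$ after a small detour through the telescoping expression $u-Qu+b\,\esp\tau$, arriving at the bound $\frac{1}{n}(2u(x)+(n-1)\|Qu\|_\infty)$, whereas you apply it directly to $u$ with the cleaner bound $\frac{u(x)}{n}+\frac{n-1}{n}\|Qu\|_\infty$; the difference is purely cosmetic.
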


\begin{proof}
We use the same idea as in lemma~\ref{lemma:extmesures}.

We assume without any loss of generality that $\nu$ is a probability measure.

\medskip
Let $g\in \lup$, $x\in\X$ and $n\in\N^\star$,
\begin{flalign*}
\frac 1 n \sum_{k=0}^{n-1}Q^k|g|^p(x)&\leqslant \frac{\|g\|_{\lup}^p} n\sum_{k=0}^{n-1}Q^k(u-Qu+b\esp\tau) \leqslant \frac{\|g\|_{\lup}^p} n \left(u-Q^nu + \sum_{k=0}^{n-1} Q^k \esp\tau\right)
\end{flalign*}
But, by definition of $u$, $\esp\tau \in\lu$ and for $k\geqslant 1$, $Q^k(u)\leqslant \|Qu\|_\infty<+\infty$, so
\[
\frac 1 n \sum_{k=0}^{n-1} Q^k|g|^p(x)\leqslant \frac{\|g\|_{\lup}^p} n (2u+(n-1)\|Qu\|_\infty)\]
According to Chacon-Ornstein's ergodic theorem, there exist a $Q-$invariant function $g^\star$ such that $\int |g|^p\di\nu=\int g^\star\di\nu$ and for $\nu-$almost every $x\in\X$,
\[\frac 1 n \sum_{k=0}^{n-1}Q^k|g|^p(x) \xrightarrow\, g^\star(x)
\]
As $u$ is finite on $\X$, we get that $g^\star\leqslant \|Qu\|_{\infty} \|g\|_{\lup}^p$, so $g^\star\in\mathrm L^\infty(\X,\nu)\subset \mathrm L^1(\X,\nu)$ and $|g|^p\in\mathrm L^1(\X,\nu)$.

Thus, $\|g\|_{\mathcal L^p(\X,\nu)}=\left(\int g^\star\di\nu\right)^{1/p} \leqslant \|Qu\|_\infty^{1/p} \|g\|_{\lup}$.
\end{proof}

Let
\begin{align*}
\mathcal E_0^{p\star}&=\{T\in(\eupp)^\star;\;\forall f\in\eupp\, T(f)=T(SRf)\}=Ker(I_d-R^\star S^\star)&\\
\mathcal F_0^{p\star}&=\{T\in(\lup)^\star;\;\forall g\in\lup\, T(g)=T(RSg)\}=Ker(I_d-S^\star R^\star)&
\end{align*}

\begin{corollary}\label{corollaire:dual} 
If $u$ is a drift function,
$\mathcal E_0^{p\star}$ and $\mathcal F_0^{p\star}$ are two Banach spaces and, $S^\star:\,\mathcal E_0^{p\star}\longrightarrow \mathcal F_0^{p\star}$ and $R^\star:\, \mathcal F_0^{p\star}\to \mathcal E_0^{p\star}$ are continuous and reciproqual.
\end{corollary}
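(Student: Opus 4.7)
The argument is entirely formal and dual in character: once I have continuous operators $R:\eupp\to\lup$ and $S:\lup\to\eupp$, the whole statement becomes a bookkeeping exercise on the identities that \emph{define} $\mathcal E_0^{p\star}$ and $\mathcal F_0^{p\star}$.

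The first step is to upgrade Lemmas~\ref{lemma:Rcontinu} and~\ref{lemma:Scontinu} from $p=1$ to general $p$, using the same computations with $u$ replaced by $u-Pu+B_u$ at the appropriate places, and to observe that since $u-Pu+B_u\geqslant 1$ one has a continuous inclusion $\mathcal B(\X)\hookrightarrow\eupp$ (a bounded $h$ satisfies $|h|\leqslant\|h\|_\infty(u-Pu+B_u)^{1/p}$). Consequently $S$ is not merely bounded-valued but continuous from $\lup$ into $\eupp$, and $R$ is continuous from $\eupp$ into $\lup$. Taking transposes yields continuous operators $S^\star:(\eupp)^\star\to(\lup)^\star$ and $R^\star:(\lup)^\star\to(\eupp)^\star$; the compositions $R^\star S^\star$ and $S^\star R^\star$ are then continuous endomorphisms of $(\eupp)^\star$ and $(\lup)^\star$ respectively, so $\mathcal E_0^{p\star}=\ker(I_d-R^\star S^\star)$ and $\mathcal F_0^{p\star}=\ker(I_d-S^\star R^\star)$ are closed subspaces of dual Banach spaces, hence Banach spaces in their own right.

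Next I would check the two inclusions $S^\star(\mathcal E_0^{p\star})\subseteq\mathcal F_0^{p\star}$ and $R^\star(\mathcal F_0^{p\star})\subseteq\mathcal E_0^{p\star}$. For the first, pick $T\in\mathcal E_0^{p\star}$; then $R^\star S^\star T=T$ by definition, and therefore
\[
S^\star R^\star(S^\star T)=S^\star(R^\star S^\star T)=S^\star T,
\]
which is exactly the condition $S^\star T\in\mathcal F_0^{p\star}$. The symmetric computation handles the other inclusion. The restricted maps remain continuous, with operator norms controlled by $\|S^\star\|$ and $\|R^\star\|$.

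Reciprocity is then tautological: the defining relation of $\mathcal E_0^{p\star}$ reads $R^\star\circ S^\star=\mathrm{Id}$ on that subspace, and similarly $S^\star\circ R^\star=\mathrm{Id}$ on $\mathcal F_0^{p\star}$, so the two restricted maps are mutual inverses. Rather than presenting a genuine obstacle, the only subtle point is the upgrade of the continuity lemmas to general $p$ together with the remark that bounded functions sit continuously inside $\eupp$; once that harmless verification is done, the corollary is merely the dual repackaging of Proposition~\ref{proposition:relations_operateurs}.
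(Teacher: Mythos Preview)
Your argument is the paper's one-line proof unpacked: cite the continuity of $R$ and $S$, the operator identities of Proposition~\ref{proposition:relations_operateurs}, and then run the formal duality; the closed-kernel, inclusion, and reciprocity steps you spell out are correct and simply make explicit what ``direct from Lemmas~\ref{lemma:Rcontinu},~\ref{lemma:Scontinu} and Proposition~\ref{proposition:relations_operateurs}'' means.

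One caveat on the step you flag as harmless. Your claimed upgrade of Lemma~\ref{lemma:Rcontinu} to general $p$ does \emph{not} go through ``by the same computations'': for $p>1$ the operator $R$ need not send $\eupp$ into $\lup$. In the half-line example with $u=u_s$, a function $f$ with $|f(x)|\asymp x^{(s-1)/p}$ lies in $\eupp$, but $Rf(x)\asymp x\cdot x^{(s-1)/p}=x^{1+(s-1)/p}$, and $1+(s-1)/p\leqslant s/p$ forces $p\leqslant 1$. So $R^\star$ is not a priori a bounded map $(\lup)^\star\to(\eupp)^\star$, and your kernel description of $\mathcal E_0^{p\star}$ as $\ker(I_d-R^\star S^\star)$ already presupposes what is in question. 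The paper's own proof is equally silent on this; one clean repair is to observe that every $T\in\mathcal F_0^{p\star}$ satisfies $T=T\circ Q$ (since $RS=Q$), hence extends canonically to a continuous functional on $\lu$ via $g\mapsto T(Qg)$, after which the $p=1$ transpose of $R$ applies and lands in $(\eup)^\star\hookrightarrow(\eupp)^\star$. The upgrade for $S$ (via $Q(u^{1/p})\leqslant(Qu)^{1/p}$ and $\mathcal B(\X)\hookrightarrow\eupp$) is fine as you say.
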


\begin{proof}
The proof is direct from lemma~\ref{lemma:Rcontinu}, \ref{lemma:Scontinu} and  \ref{proposition:relations_operateurs} .
\end{proof}

\section{The LLN, the CLT and the LIL for martingales bounded by a Drift function}\label{section:LLN_TCL}

First, we extend Breiman's law of large numbers for martingales (see \cite{Br60}) for measurable functions such that $f^{1+\alpha}\in\eup$ for some $\alpha>0$ 
\begin{lemma}\label{lemma:sommation_Abel}
Let $u$ be a drift function, $x\in X$, and $\alpha\in\R_+$,
\[ \sup_{n\in\N}\sum_{k=0}^{n} \frac {P^{k}(u-Pu)}{(k+1)^{\alpha}} \leqslant u(x)\]
\end{lemma}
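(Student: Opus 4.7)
The plan is to prove this by summation by parts (Abel's summation). The first observation is the telescoping identity
\[
\sum_{k=0}^{n} P^k(u-Pu)(x) = u(x) - P^{n+1}u(x),
\]
which is valid pointwise on $\X$: since $u$ is a drift function, $Pu-u$ is bounded above by $B_u-1$, hence $P^ku \leqslant u + k(B_u-1)$ is finite everywhere, and each term in the sum is a well-defined real number, justifying the reindexing.

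Set $S_k(x) = u(x) - P^{k+1}u(x)$ and $T_k = (k+1)^{-\alpha}$; note that $T_k \geqslant 0$ and, because $\alpha \geqslant 0$, the sequence $(T_k)$ is non-increasing, so $T_k - T_{k+1} \geqslant 0$. Abel summation then gives
\[
\sum_{k=0}^{n} T_k\, P^k(u-Pu) \;=\; T_n S_n \,+\, \sum_{k=0}^{n-1}(T_k - T_{k+1})\, S_k.
\]
Substituting $S_k = u - P^{k+1}u$ and using the telescoping identity $T_n + \sum_{k=0}^{n-1}(T_k - T_{k+1}) = T_0 = 1$, the $u$-terms collect to a single $u$, yielding
\[
\sum_{k=0}^{n} \frac{P^k(u-Pu)}{(k+1)^\alpha} \;=\; u \,-\, T_n\, P^{n+1}u \,-\, \sum_{k=0}^{n-1}(T_k - T_{k+1})\, P^{k+1}u.
\]

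To conclude, observe that $u \geqslant 1 \geqslant 0$, so $P^{k+1}u \geqslant 0$ for every $k$ (since $P$ is a positive operator); combined with $T_n \geqslant 0$ and $T_k - T_{k+1} \geqslant 0$, the two subtracted terms on the right are non-negative. Therefore the weighted partial sum is pointwise bounded above by $u(x)$, uniformly in $n$, and taking the supremum over $n \in \N$ gives the lemma.

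There is no real obstacle here: the entire argument is a clean application of Abel summation combined with the telescoping of $\sum P^k(u - Pu)$. The only point requiring attention is checking that everything in sight is finite, which follows immediately from the drift hypothesis that $Pu - u$ is bounded above.
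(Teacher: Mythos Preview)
Your proof is correct and is essentially the same as the paper's: both perform Abel summation on $\sum T_k\,P^k(u-Pu)$ and then drop the non-negative terms $T_n P^{n+1}u$ and $\sum (T_k-T_{k+1})P^{k+1}u$. The paper's computation is just a slightly more compressed version of the identity you derived (reindex your last sum by $j=k+1$ and the two displays coincide).
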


\begin{proof}
This is a direct computation :
\begin{flalign*}
\sum_{k=0}^{n} \frac {P^{k}(u-Pu)}{(k+1)^{\alpha}} &= \sum_{k=0}^n \frac 1 {(k+1)^\alpha} P^ku -\sum_{k=0}^n \frac 1 {(k+1)^\alpha} P^{k+1} u &\\
& =\sum_{k=1}^n (\frac 1 {(k+1)^\alpha}-\frac 1 {k^\alpha})P^ku + u-\frac 1 {(n+1)^\alpha} P^{n+1} u &\\
& \leqslant u(x) \text{ since }u\text{ is non negative}&
\end{flalign*}
\end{proof}

\begin{lemma}\label{lemma:u(X_n)/n}
Let $u$ be a drift function and $\alpha\in \R_+^\star$. Then, for all $f\in \eup$ and all $x\in \X$ such that $u(x)$ is finite,
\[
\frac 1 {n^{1+\alpha}} f(X_n) \xrightarrow\, 0 \; a.e.\text{ and in }\mathrm{L}^1(\prob_x)
\]
\end{lemma}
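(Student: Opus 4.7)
The plan is to bound $f$ by the non-negative function $v := u - Pu + B_u$, using $|f| \leq \|f\|_{\eup} v$, and then to extract both convergences from the summability of the series $\sum_n P^n v(x)/(n+1)^{1+\alpha}$.

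First I would apply lemma~\ref{lemma:sommation_Abel} with the exponent $1+\alpha$ (allowed because $\alpha > 0$) to obtain
\[
\sup_{n \in \N} \sum_{k=0}^n \frac{P^k(u - Pu)(x)}{(k+1)^{1+\alpha}} \leq u(x).
\]
Since $P^k v = P^k(u - Pu) + B_u$ and $\sum_{k \geq 0} B_u/(k+1)^{1+\alpha} < \infty$ (because $1+\alpha > 1$), the partial sums $\sum_{k=0}^n P^k v(x)/(k+1)^{1+\alpha}$ are bounded above. Because $v \geq 1 \geq 0$, each $P^k v(x)$ is non-negative, so these partial sums are non-decreasing, and therefore the series $\sum_{k \geq 0} P^k v(x)/(k+1)^{1+\alpha}$ converges.

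From this one extracts both conclusions. For $\mathrm{L}^1(\prob_x)$-convergence: a convergent series of non-negative terms has terms tending to $0$, so $P^n v(x)/n^{1+\alpha} \to 0$, and then
\[
\esp_x \frac{|f(X_n)|}{n^{1+\alpha}} \leq \|f\|_{\eup} \frac{P^n v(x)}{n^{1+\alpha}} \xrightarrow[n \to \infty]{} 0.
\]
For almost sure convergence, Fubini's theorem gives
\[
\esp_x \sum_{k=0}^\infty \frac{v(X_k)}{(k+1)^{1+\alpha}} = \sum_{k=0}^\infty \frac{P^k v(x)}{(k+1)^{1+\alpha}} < \infty,
\]
so $\prob_x$-almost surely the series $\sum_k v(X_k)/(k+1)^{1+\alpha}$ converges, hence its terms tend to $0$, which with $|f(X_n)| \leq \|f\|_{\eup} v(X_n)$ yields $f(X_n)/n^{1+\alpha} \to 0$ almost surely. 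The one subtle point I expect to verify carefully is that lemma~\ref{lemma:sommation_Abel} only gives an upper bound on the partial sums of $P^k(u-Pu)/(k+1)^{1+\alpha}$, whose individual terms may have either sign; the trick is that adding the positive correction $B_u/(k+1)^{1+\alpha}$ produces a non-negative series whose bounded non-decreasing partial sums are automatically convergent, converting a one-sided estimate into genuine summability.
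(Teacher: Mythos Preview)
Your proposal is correct and follows essentially the same route as the paper: both establish the summability of $\sum_k P^k(u-Pu+B_u)(x)/(k+1)^{1+\alpha}$ via lemma~\ref{lemma:sommation_Abel} plus the convergence of $\sum_k B_u/(k+1)^{1+\alpha}$, and both read off the $\mathrm{L}^1$ convergence from the fact that the terms of a convergent non-negative series tend to zero. The only difference is in the almost-sure part: the paper uses Markov's inequality and Borel--Cantelli, while you use Fubini to pass the finite expectation inside and conclude that the random series $\sum_k v(X_k)/(k+1)^{1+\alpha}$ converges $\prob_x$-a.s., hence its terms vanish. Both arguments are standard and equally short; yours is perhaps slightly more direct since it avoids introducing an auxiliary $\varepsilon$.
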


\begin{proof}
We compute
\[
\esp_x |f(X_n)| \leqslant \|f\|_u \esp_x u(X_n) - Pu(X_n) + b = P^n(u-Pu+b)
\]
So,
\[
\sum_{k=0}^n \frac{\esp_x |f(X_k)|}{(k+1)^{1+\alpha}} \leqslant \sum_{k=0}^n \frac{P^k(u-Pu)}{(k+1)^{1+\alpha}} + b\sum_{k=0}^n \frac 1 {(k+1)^{1+\alpha}} \leqslant u(x) + b \sum_{n\in \N^\star} \frac 1 {n^{1+\alpha}} 
\]
where we used lemma~\ref{lemma:sommation_Abel} to bound the first sum.

So, for any $x\in \X$ such that $u(x)$ is finite,
\[
\sum_{k=0}^{+\infty} \frac{\esp_x |f(X_k)|}{(k+1)^{1+\alpha}}
\]
is finite.

This proves the convergence in $\mathrm{L}^1(\prob_x)$ and to get the $a.e.-$one, we use Borel-Cantelli's theorem since for any $\varepsilon \in \R_+^\star$,
\[
\sum_{n=0}^{+\infty} \prob_x\left( \frac{|f(X_n)|}{(n+1)^{1+\alpha}} \geqslant \varepsilon \right) \leqslant \frac 1 \varepsilon \sum_{k=0}^{+\infty} \frac{\esp_x |f(X_k)|}{(k+1)^{1+\alpha}}
\]
\end{proof}

\begin{proposition}[Law of large numbers for martingales]~\\
\label{proposition:lln_martingales}
Let $u$ be a drift function and $\alpha\in \R_+^\star$. For all $f\in\eupa$  and all $x\in X$,
\[\frac 1 n \sum_{k=0}^{n-1} f(X_{k+1})-Pf(X_k)\xrightarrow \, 0  \;\;\prob_x-a.e.\textnormal{ and in } \mathrm L^{1+\alpha}(\prob_x)\]
\end{proposition}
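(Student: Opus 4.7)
The plan is to recognize $M_n := \sum_{k=0}^{n-1}(f(X_{k+1}) - Pf(X_k))$ as a martingale under $\prob_x$ (with respect to the natural filtration of the chain) and then invoke a classical martingale strong law of large numbers for the a.e.\ statement, together with a classical $L^p$ inequality for sums of martingale differences for the norm convergence. The only real input needed from this paper is the moment control provided by the drift function $u$ via lemma~\ref{lemma:sommation_Abel}.

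First I would control the moments of the increments $Z_k := f(X_{k+1}) - Pf(X_k)$. Combining the convexity inequality $|a-b|^{1+\alpha}\leq 2^{\alpha}(|a|^{1+\alpha}+|b|^{1+\alpha})$ with Jensen's inequality for the Markov operator $P$ (which yields $|Pf|^{1+\alpha}\leq P|f|^{1+\alpha}$) gives
\[
\esp_x |Z_k|^{1+\alpha} \leq 2^{1+\alpha}\, P^{k+1}\!\bigl(|f|^{1+\alpha}\bigr)(x).
\]
The hypothesis $f\in\eupa$ means exactly that $|f|^{1+\alpha}\leq \|f\|_{\eupa}^{1+\alpha}(u-Pu+B_u)$ pointwise, so
\[
\esp_x |Z_k|^{1+\alpha} \leq C\bigl( P^{k+1}(u-Pu)(x) + B_u\bigr),
\]
where $C = 2^{1+\alpha}\|f\|_{\eupa}^{1+\alpha}$.

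For the almost sure statement, I would apply lemma~\ref{lemma:sommation_Abel} with exponent $1+\alpha$ (and a trivial index shift) to deduce $\sum_{k\geq 0}\esp_x|Z_k|^{1+\alpha}/(k+1)^{1+\alpha}<\infty$. This is precisely the hypothesis of Breiman's martingale strong law in~\cite{Br60}, which delivers $M_n/n \to 0$ $\prob_x$-a.s. For the $L^{1+\alpha}$ statement, the telescoping identity $\sum_{k=0}^{n-1}P^{k+1}(u-Pu)(x) = Pu(x) - P^{n+1}u(x) \leq Pu(x)$ (using $u\geq 0$) gives $\sum_{k=0}^{n-1}\esp_x |Z_k|^{1+\alpha} \leq C(Pu(x)+nB_u) = O(n)$; combined with the von~Bahr--Esseen inequality when $1+\alpha\leq 2$, and with Burkholder--Davis--Gundy plus H\"older when $1+\alpha>2$, this yields $\esp_x|M_n/n|^{1+\alpha}\to 0$.

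The main obstacle is essentially bookkeeping: confirming that the abstract drift condition delivers exactly the integrability required by Breiman's theorem, and choosing the correct martingale $L^p$ inequality for each range of $\alpha$. Since $(M_n)$ is a genuine martingale under $\prob_x$ for every individual $x$, neither ergodicity nor the existence of a $P$-invariant measure enters the argument.
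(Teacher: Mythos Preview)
Your proof is correct and follows essentially the same route as the paper: bound the $(1+\alpha)$-th moment of the martingale increments by a constant times $P^{k+1}(u-Pu+B_u)(x)$, sum against $(k+1)^{-(1+\alpha)}$ via lemma~\ref{lemma:sommation_Abel}, and invoke a martingale strong law. The only cosmetic difference is that the paper cites theorem~2.18 of Hall--Heyde~\cite{HH80} to obtain the a.s.\ and the $L^{1+\alpha}$ convergence in one stroke, whereas you treat the $L^{1+\alpha}$ part separately via von~Bahr--Esseen / BDG; your version has the mild advantage of being explicit about the case $1+\alpha>2$.
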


\begin{proof}
Let $M_n= \sum_{k=0}^{n-1} f(X_{k+1})-Pf(X_k)$.\\
$M_n$ is a martingale of null expectation.\\
And
\begin{flalign*}
\esp_x |M_{n+1}-M_n|^{1+\alpha} &=\esp_x |f(X_{n+1})-Pf(X_{n})|^{1+\alpha}=P^n(\esp_x|f(X_1)-Pf(x)|^{1+\alpha}) &\\
& \leqslant P^{n+1}(|f|^{1+\alpha})(x)\leqslant\| |f|^{1+\alpha}\|_{u-Pu+b} P^{n+1}(u-Pu+b)& 
\end{flalign*}
Hence,
\begin{flalign*}
\sum_{n=1}^{+\infty} \frac 1 {n^{1+\alpha}} \esp_x |M_{n+1}-M_n|^{1+\alpha} &\leqslant \|f^{1+\alpha}\|_{u-Pu+b}\sum_{n=1}^{+\infty} \frac {P^{n+1}(u-Pu+b)}{n^{1+\alpha}} \\
&\leqslant \|f^{1+\alpha}\|_{u-Pu+b} \left( u(x)+b\sum_{k=0}^{+\infty} \frac 1 {n^{1+\alpha}}\right)
\end{flalign*}
And using the strong law of large numbers for martingales (see theorem 2.18 in \cite{HH80}), we get that $\frac 1 n M_n\xrightarrow \,0$ $\prob_x-$a.e. and in $\mathrm L^{1+\alpha}(\prob_x)$.
\end{proof}

Using the same kind of trick, we can prove the following

\begin{lemma}\label{lemma:lindeberg}
Let $u$ be a drift function and let $\alpha\in\R_+^\star$.

Let $g\in\eupda$ and $x\in\X$ be such that $u(x)$ is finite.

Then, for any $\varepsilon\in \R_+^\star$
\[\frac 1 n \sum_{k=0}^{n-1} \esp_x \left((g(X_{k+1})-Pg(X_k))^2 \un_{ |g(X_{k+1})- Pg(X_k)|\geqslant\varepsilon\sqrt n}\right)\xrightarrow[n\to +\infty]\, 0\]
and
\[
\sum_{n=1}^{+\infty} \frac1 {\sqrt n} \esp_x \left(\left|g(X_{n+1})-Pg(X_n) \right| \un_{ |g(X_{n+1})- Pg(X_n)|\geqslant\varepsilon\sqrt n} \right)\text{ is finite}
\]
Finally, there is $\delta \in\R_+^\star$ such that
\[
\sum_{n=1}^{+\infty} \frac 1 {n^2} \esp_x \left((g(X_{n+1})-Pg(X_n))^4 \un_{ |g(X_{n+1})- Pg(X_n)|\leqslant\delta \sqrt n}\right) 
\]
is finite.
\end{lemma}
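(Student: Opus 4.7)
\medskip
\noindent\textbf{Proof plan.} Set $D_n := g(X_{n+1})-Pg(X_n)$, the natural martingale increments associated to $g$. All three assertions will reduce to a single moment bound on the $D_n$. By the conditional Jensen inequality $|Pg|^{2+\alpha}\leq P|g|^{2+\alpha}$ and the elementary convexity estimate $|a-b|^{2+\alpha}\leq 2^{1+\alpha}(|a|^{2+\alpha}+|b|^{2+\alpha})$, one obtains
\[
\esp_x|D_n|^{2+\alpha}\leq 2^{2+\alpha}P^{n+1}(|g|^{2+\alpha})(x)\leq C_g\,P^{n+1}(u-Pu+B_u)(x),
\]
with $C_g:=2^{2+\alpha}\|g\|_{\eupda}^{2+\alpha}$, since $g\in\eupda$ means exactly $|g|^{2+\alpha}\leq \|g\|_{\eupda}^{2+\alpha}(u-Pu+B_u)$. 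An Abel summation in the spirit of lemma~\ref{lemma:sommation_Abel} applied with exponent $1+\alpha/2>1$ (together with the convergent series $\sum n^{-(1+\alpha/2)}$, which absorbs the constant $B_u$) yields the \emph{master estimate}
\[
\sum_{n\geq 1}\frac{\esp_x|D_n|^{2+\alpha}}{n^{1+\alpha/2}}<+\infty,
\]
valid because $u(x)$ is finite.

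\medskip
For the first assertion, the Markov-type bound $\un_{|D_k|\geq \varepsilon\sqrt n}\leq \varepsilon^{-\alpha}n^{-\alpha/2}|D_k|^{\alpha}$ gives
\[
\frac{1}{n}\sum_{k=0}^{n-1}\esp_x\!\bigl(D_k^2\un_{|D_k|\geq \varepsilon\sqrt n}\bigr)\leq \frac{C_g}{\varepsilon^\alpha n^{1+\alpha/2}}\sum_{k=0}^{n-1}P^{k+1}(u-Pu+B_u)(x)\leq \frac{C_g(Pu(x)+nB_u)}{\varepsilon^\alpha n^{1+\alpha/2}},
\]
after the telescoping identity $\sum_{k=0}^{n-1}P^{k+1}(u-Pu)(x)=Pu(x)-P^{n+1}u(x)\leq Pu(x)$; the $Pu(x)/n^{1+\alpha/2}$ and the $B_u/n^{\alpha/2}$ contributions both vanish as $n\to\infty$ precisely because $\alpha>0$. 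The second assertion follows just as quickly from $|D_n|\un_{|D_n|\geq \varepsilon\sqrt n}\leq \varepsilon^{-(1+\alpha)}n^{-(1+\alpha)/2}|D_n|^{2+\alpha}$, which recasts the series as $\varepsilon^{-(1+\alpha)}$ times the master estimate.

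\medskip
For the third assertion I would split on the size of $\alpha$. If $\alpha\leq 2$, on the event $\{|D_n|\leq \delta\sqrt n\}$ one has $|D_n|^4\leq (\delta\sqrt n)^{2-\alpha}|D_n|^{2+\alpha}$, so the series is bounded by $\delta^{2-\alpha}$ times the master estimate. If $\alpha>2$ the truncation is superfluous: $|D_n|^4\leq |D_n|^{2+\alpha}+1$, and $\sum_n\esp_x|D_n|^{2+\alpha}/n^2<+\infty$ follows from lemma~\ref{lemma:sommation_Abel} applied with exponent $2$. Any $\delta\in\R_+^\star$ therefore works. The main difficulty is really just careful bookkeeping: one must rewrite each truncated expectation in the form $C\,n^{-\beta}\esp_x|D_n|^{2+\alpha}$ for the right $\beta$, and notice that the linear-in-$n$ residue $nB_u$ coming from the telescoping in the first assertion is killed by the factor $n^{-(1+\alpha/2)}$ exactly because $\alpha>0$ strictly—the case $\alpha=0$ would break the first statement.
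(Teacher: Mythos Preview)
Your proof is correct and follows essentially the same approach as the paper: reduce all three assertions to the moment bound $\esp_x|D_n|^{2+\alpha}\leq C\,P^{n+1}(u-Pu+B_u)(x)$ via Markov-type truncation inequalities, then control the resulting series by the telescoping/Abel argument of lemma~\ref{lemma:sommation_Abel}. Your explicit case split on $\alpha\lessgtr 2$ for the third assertion is slightly more careful than the paper, which lumps the second and third sums together and tacitly uses the bound $|D_n|^4\un_{|D_n|\leq\delta\sqrt n}\leq(\delta\sqrt n)^{2-\alpha}|D_n|^{2+\alpha}$ without comment on the range of $\alpha$.
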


\begin{proof}
Using Markov's property and inequality, we have that,
\[
\esp_x \left(h(X_{k+1}, X_k)^2\un_{ |h(X_{k+1}, X_k)|\geqslant\varepsilon\sqrt n}\right)
\leqslant\frac { P^k\left(\esp\left(g(X_1)-Pg(X_0)\right)^{2+\alpha}\right)} {\varepsilon^{\alpha} n^{\alpha/2}}
\]
where we noted $h(x,y)= g(x) - Pg(y)$.

But, $\esp_x\left[\left((g(X_1)-Pg(X_0)\right)^{2+\alpha}\right]\in\eup$, since $g\in\eupda$.

and so,
\begin{flalign*}
\frac 1 n \sum_{k=1}^n \esp_x \left(h(X_{k+1}, X_k)^2\un_{ |h(X_{k+1},X_k)|\geqslant\varepsilon\sqrt n}\right)
&\leqslant  C {n^{1+\alpha/2} \varepsilon^\alpha} \sum_{k=0}^{n-1} {P^k(u-Pu+b) }\\
&\leqslant  \frac C {n^{1+\alpha/2} \varepsilon^\alpha} u(x) + \frac {bC} {n^{\alpha/2} \varepsilon^\alpha}
\end{flalign*}
And the right side of this inequality goes to $0$ when $n$ goes to infinity.

\medskip
The two sums we have to bound are dominated by constants times
\[
\sum_{n=1}^{+\infty} \frac {1} {n^{1+\alpha/2}} \esp_x \left(\left|g(X_{n+1})-Pg(X_n)\right|^{2+\alpha} \right) 
\]
and, once again, using that $g\in \eupda$, we have that
\[
 \esp_x \left(\left|g(X_{n+1})-Pg(X_n)\right|^{2+\alpha} \right)  \leqslant \|g\|_{\eupda}  P^n(u-Pu+b)
\]
And we conclude with lemma~\ref{lemma:sommation_Abel}.
\end{proof}

Lemma~\ref{lemma:lindeberg} is actually the first step in the proof of the Central limit theorem and of the law of iterated logarithm for martingales as we see in next
\begin{corollary}[Central Limit Theorem and Law of the Iterated Logarithm for martingales] \label{corollary:TCL_martingales} Let $u$ be a drift function and $\alpha\in\R_+^\star$.

Let $g\in\eupda$ and $x\in\X$ be such that $u(x)$ is finite.

If
\[
\frac 1 n \sum_{k=0}^{n-1} P(g^2)(X_k) - (Pg(X_k))^2
\]
converges in $\mathrm{L}^1(\prob_x)$ and almost-everywhere, to some $\sigma^2(g,x)\not=0$,
then
\[
\frac 1 {\sqrt n} \sum_{k=0}^{n-1} g(X_{k+1})-Pg(X_k) \xrightarrow[n\to\infty]{\mathcal L}\cal N(0,\sigma^2(g,x))
\]
Moreover,
\[
\limsup \frac{\sum_{k=0}^{n-1} g(X_{k+1}) - Pg(X_k)}{\sqrt{2n \sigma^2(g,x) \ln\ln(n)}} =1 \;a.e.
\]
and
\[
\liminf \frac{\sum_{k=0}^{n-1} g(X_{k+1}) - Pg(X_k)}{\sqrt{2 n \sigma^2(g,x) \ln\ln(n)}} =-1 \; a.e.
\]
\end{corollary}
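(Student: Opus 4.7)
The plan is to realize
\[
M_n := \sum_{k=0}^{n-1}\bigl(g(X_{k+1}) - Pg(X_k)\bigr)
\]
as a square-integrable martingale with respect to the filtration $\cal F_n = \sigma(X_0,\dots,X_n)$ under $\prob_x$, and to apply classical martingale limit theorems (Brown's CLT and the Heyde--Scott/Stout LIL) after checking that every hypothesis these theorems require is either built into the assumption on $\sigma^2(g,x)$ or supplied by Lemma~\ref{lemma:lindeberg}. The increments $\xi_k := g(X_{k+1})-Pg(X_k)$ lie in $\mathrm L^{2+\alpha}(\prob_x)$ thanks to $g\in\eupda$ and $u(x)<+\infty$, so $M_n$ is a genuine square-integrable martingale.

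First I would use the Markov property to compute
\[
\esp_x\bigl[\xi_k^2 \,\big|\, \cal F_k\bigr] = P(g^2)(X_k) - (Pg(X_k))^2,
\]
so that the hypothesis of convergence of the Cesàro averages of $P(g^2)(X_k) - (Pg(X_k))^2$ to $\sigma^2(g,x)\neq 0$, both in $\mathrm L^1(\prob_x)$ and $\prob_x$-almost surely, is exactly the convergence of the normalized predictable quadratic variation $\langle M\rangle_n/n$ needed in any martingale CLT. The first statement of Lemma~\ref{lemma:lindeberg} is Lindeberg's condition for the triangular array $(\xi_k/\sqrt n)$, so invoking Brown's theorem~\cite{Bro71} (equivalently, Theorem~3.2 of~\cite{HH80}) gives $M_n/\sqrt n \xrightarrow{\cal L} \cal N(0,\sigma^2(g,x))$, which is the first conclusion of the corollary.

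For the LIL, I would appeal to a martingale LIL such as Theorem~4.8 of~\cite{HH80} (originally due to Heyde--Scott and Stout). Beyond the almost sure convergence of $\langle M\rangle_n/n$ already supplied, these statements require two truncated moment conditions on the increments: the summability of $\sum_n n^{-1/2}\,\esp_x\bigl[|\xi_n|\un_{\{|\xi_n|\geqslant \varepsilon\sqrt n\}}\bigr]$, which controls large jumps outside the LIL scale $\sqrt{n\ln\ln n}$, and the Kolmogorov-type summability of $\sum_n n^{-2}\,\esp_x\bigl[\xi_n^4\un_{\{|\xi_n|\leqslant \delta\sqrt n\}}\bigr]$, which handles the truncated fourth moment. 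These are precisely the second and third statements of Lemma~\ref{lemma:lindeberg}, so the LIL applies with normalization $\sqrt{2n\sigma^2(g,x)\ln\ln n}$ and yields $\limsup = 1$, $\liminf = -1$ as claimed.

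The main obstacle is purely bookkeeping: matching the exact phrasing in which Lemma~\ref{lemma:lindeberg} supplies the Lindeberg and truncated-moment bounds to the quantitative hypotheses of whichever off-the-shelf martingale CLT and LIL one cites, and verifying that the normalization $\sqrt{n\sigma^2(g,x)}$ is the one those theorems output (so that the assumption $\sigma^2(g,x)\neq 0$ is exactly what rules out the degenerate case where the limit distribution collapses to a point mass).
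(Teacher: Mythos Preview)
Your proposal is correct and follows essentially the same route as the paper: identify the increments $\xi_k=g(X_{k+1})-Pg(X_k)$ as a square-integrable martingale difference with conditional variance $P(g^2)(X_k)-(Pg(X_k))^2$, invoke Brown's CLT using the Lindeberg condition from Lemma~\ref{lemma:lindeberg}, and obtain the LIL from Theorem~4.8 (together with Corollary~4.2) of~\cite{HH80} via the remaining truncated-moment estimates of that lemma. If anything, you spell out more of the bookkeeping than the paper does.
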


\begin{proof}
The first convergence is a straightforward consequence of Brown's central limit theorem for martingales (see~\cite{Bro71}) since the so called Lindeberg condition holds when the space has a drift function, according to lemma~\ref{lemma:lindeberg}.

The Law of the iterated logarithm is given by corollary~4.2 and theorem~4.8 of Hall and Heyde in~\cite{HH80} since the assumptions of corollary hold according to lemma~\ref{lemma:lindeberg}.
\end{proof}

\begin{remark}
If $g\in \eupda$, lemma~\ref{lemma:u(X_n)/n} proves that $g(X_n)/\sqrt n$ converges to $0$ in $\mathrm{L}^1(\prob_x)$ and $a.e.$ so the previous results still holds if we look at $\sum_{k=0}^{n-1} g(X_k) - Pg(X_k)$ instead of $\sum_{k=0}^{n-1} g(X_{k+1}) - Pg(X_k)$. We will use this remark in the sequel to study functions $f$ that can be written $f=g-Pg$ with $g\in \eupda$.
\end{remark}

\section{Application to a random walk on a half line}

In this section, we apply the previous properties to a random walk on $\R_+$.

Let $\rho$ be a probability measure on $\R$. For $x_0 \in \R_+$, we define the random walk $(X_n)$ driven by $\rho$ and starting at $x_0$ by
\begin{equation} \label{equation:max_walk}
\left\{\begin{array}{ccc}
X_0 & = & x_0 \\
X_{n+1} & = & \max(X_n + Y_{n+1}, 0)
\end{array}
\right.
\end{equation}
where $(Y_n)$ has law $\rho^{\otimes \N}$.

This random walk on a half-line is a model for storage systems and queueing processes.
In their book~\cite{MT93}, Meyne and Tweedie study the recurrence properties of the walk and in~\cite{La95}, Lalley studies the return probabilities for this walk.

\medskip
It is clear that if $\supp \rho \subset \R_-^\star$, then, for any starting point $x$, and $\rho^{\otimes \N}-$a.e. $(Y_n)$, the walk stays at $0$ after a finite number of steps. But, if $\rho(\R_+^\star) >0$, then the walk is not bounded : for any $M \in \R_+$ and any $x\in \R_+$, $\prob_x( \sup_{n} X_n \leqslant M) =0$.

\medskip
First, we have the following
\begin{proposition}
Let $\rho$ be a probability measure on $\R$ having a finite first moment. 
Then, if $\tau$ is the ($\theta-$compatible) stopping time defined by
\[
\tau((X_n)) = \inf\{ n \in \N^\star | X_{n-1} + Y_{n} \leqslant 0\}
\]
we have that for any $x\in \R_+$, $\esp_x \tau$ (in particular, $\tau$ is a.e. finite).

Moreover, there is a $P-$invariant probability measure $\mu$ on $\R_+$ and it is given, for any borelian bounded function $f$, by
\[
\mu(f) = \frac 1 {\esp_0 \tau} \esp_0 \sum_{k=0}^{\tau -1 } f(X_k) 
\]
\end{proposition}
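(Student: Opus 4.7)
My plan is to recognize $\tau$ as the first return time to the atom $\{0\}$, prove $\esp_x\tau<\infty$ for every $x\in\R_+$, and then lift the (evidently unique) $Q$-invariant probability $\delta_0$ to the required $P$-invariant probability via Lemma~\ref{lemma:Retoile_nu}.

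First, since $X_n=\max(X_{n-1}+Y_n,0)$, the event $\{X_{n-1}+Y_n\leqslant 0\}$ is exactly $\{X_n=0\}$, so $\tau=\inf\{n\geqslant 1:X_n=0\}$ is the first return time to $\{0\}$, hence $\theta$-compatible by Example~\ref{example:linearly_recurrent}. As long as $k<\tau$ the walk has not been clipped, so $X_k=x+S_k$ where $S_k=Y_1+\cdots+Y_k$; in particular $\tau=\inf\{n\geqslant 1:S_n\leqslant -x\}$ for every $x\in\R_+$.

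The main step is the finiteness $\esp_x\tau<\infty$. I would take $V(x)=x$ and compute
\begin{align*}
PV(x)-V(x)=\esp\max(x+Y,0)-x=\lambda+\esp(x+Y)^-,
\end{align*}
where $(a)^-=\max(-a,0)$. Since $0\leqslant(x+Y)^-\leqslant Y^-$ and $(x+Y)^-\to 0$ almost surely as $x\to\infty$, dominated convergence (using $\esp Y^-<\infty$) yields $\esp(x+Y)^-\to 0$, so there is $M>0$ with $PV(x)\leqslant V(x)-|\lambda|/2$ for all $x\geqslant M$. Optional stopping on the supermartingale $V(X_{n\wedge\sigma_M})+(|\lambda|/2)(n\wedge\sigma_M)$, with $\sigma_M:=\inf\{n\geqslant 0:X_n\leqslant M\}$, then gives $\esp_x\sigma_M\leqslant 2x/|\lambda|$. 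For $y\in[0,M]$ one has $\tau_y\leqslant T_M:=\inf\{n\geqslant 1:S_n\leqslant -M\}$, which reduces matters to showing $\esp T_M<\infty$; this I would establish by the classical truncation trick, replacing $Y_n$ with $\tilde Y_n:=\max(Y_n,-C)$ for $C$ large enough that $\esp\tilde Y\leqslant\lambda/2$. The truncated walk satisfies $\tilde S_n\geqslant S_n$, so $\tilde T_M\geqslant T_M$, and because its overshoot lies in $[-M-C,-M]$, Wald's identity applied to $\tilde T_M\wedge n$ together with monotone convergence gives $\esp T_M\leqslant\esp\tilde T_M\leqslant 2(M+C)/|\lambda|$. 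The strong Markov property at $\sigma_M$ then yields $\esp_x\tau<\infty$ for every $x\in\R_+$.

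Finally, since $X_\tau=0$ identically, the induced operator is $Qf(x)=\esp_x f(X_\tau)=f(0)$; hence the only $Q$-invariant probability measure is $\delta_0$. By Lemma~\ref{lemma:Retoile_nu}, $R^\star\delta_0$ is a nonzero $P$-invariant measure with finite total mass $R1(0)=\esp_0\tau$, and normalizing produces
\begin{align*}
\mu(f)=\frac{1}{\esp_0\tau}\,R^\star\delta_0(f)=\frac{1}{\esp_0\tau}\,\esp_0\sum_{k=0}^{\tau-1}f(X_k),
\end{align*}
as announced. The main obstacle is the estimate $\esp T_M<\infty$: a finite first moment alone forbids a direct application of Wald's identity to $T_M$, so the truncation-plus-Wald trick (or the classical random-walk theorem it encodes) is essential.
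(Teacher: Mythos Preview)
Your proof is correct. For the invariant measure you and the paper arrive at the same object $\mu=\frac{1}{R1(0)}R^\star\delta_0$ by essentially the same mechanism: you invoke Lemma~\ref{lemma:Retoile_nu} on the (unique) $Q$-invariant measure $\delta_0$, while the paper checks directly that $R(I_d-P)f(0)=(I_d-Q)f(0)=0$ using Proposition~\ref{proposition:relations_operateurs}. The only point you leave implicit is that $QR1(x)=R1(0)$ is constant, hence bounded, which is what Lemma~\ref{lemma:Retoile_nu} needs for finiteness of $R^\star\delta_0$; this is immediate.

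The genuine difference is in the finiteness of $\esp_x\tau$. The paper simply cites Spitzer (Proposition~18.1 in~\cite{Spi64}); you give a self-contained argument via the linear Lyapunov function $V(x)=x$, optional stopping to reach $[0,M]$, and then the truncation-plus-Wald estimate for the ladder time $T_M$. Your route is more elementary and keeps everything inside the drift-function philosophy of the paper; it also makes explicit that the assumption $\lambda<0$ is what drives the result (the paper's statement says only ``finite first moment'', but both proofs in fact require the negative drift that the rest of the section assumes). The cost is a few extra lines; the benefit is that a reader need not consult Spitzer.
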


\begin{proof}
That for any $x\in \R_+$, $\esp_x\tau $ is finite comes from proposition~18.1 in \cite{Spi64}.

Let $Q$ be the induced operator by $\tau$ and $R$ and $S$ be the operators associated to $\tau$ and defined by equations~\ref{eqn:R} and~\ref{eqn:S}. The measure $\mu$ now writes :
\[
\mu(f) = \frac{Rf(0)}{R1(0)}
\]

So, it is a probability measure and according to~\ref{proposition:relations_operateurs} we have that for any borelian bounded function $f$,
\[
R(I_d-P)f(0) = (I_d-Q)f(0) = f(0) - \esp_x(f(X_\tau)) = f(0)-f(0) = 0\]
thus, the measure $\mu$ is $P-$invariant. (to apply proposition~\ref{proposition:relations_operateurs}, we choose $u(x) = \esp_x\tau$ since then, $Pu(x) = \int_{\{\tau \geqslant 2\}} \tau-1 + \int_{\{\tau =1\}} \tau^2 = u(x) -1 + \esp_0 \tau \prob_x(\tau =1)$ and so, $u$ is a drift function).
\end{proof}

\begin{lemma}
Let $s\in [1,+\infty[$ be such that $\int_\R |y|^s \di\rho(y)$ is finite.

Then,
\[
\lim_{x\to +\infty} \int_\R \frac{\max(x+y,0)^s - x^s}{x^{s-1}} \di\rho(y) = s\int_\R y\di\rho(y)
\]
\end{lemma}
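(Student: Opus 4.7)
The plan is to apply the dominated convergence theorem to the integrand
\[
\varphi_x(y) := \frac{\max(x+y,0)^s - x^s}{x^{s-1}}.
\]
First I would verify the pointwise limit. Fix $y\in\R$. For $x$ large enough we have $x+y > 0$, and then
\[
\varphi_x(y) = x\left(\left(1+\tfrac{y}{x}\right)^s - 1\right) \xrightarrow[x\to+\infty]{} sy,
\]
by a first order Taylor expansion of $t\mapsto t^s$ at $t=1$.

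The main work is producing an integrable envelope. I would split according to the sign of $y$. If $y\geqslant 0$ and $x\geqslant 1$, the mean value theorem gives some $\xi\in[x,x+y]$ with
\[
(x+y)^s - x^s = s\xi^{s-1} y \leqslant s(x+y)^{s-1} y,
\]
hence
\[
0\leqslant \varphi_x(y) \leqslant s\bigl(1+y/x\bigr)^{s-1} y \leqslant s(1+y)^{s-1} y \leqslant C_s\bigl(y + y^s\bigr)
\]
for a constant $C_s$ depending only on $s$. If $y < 0$ and $x+y\geqslant 0$, the mean value theorem again yields some $\xi\in[x+y,x]$ with $(x+y)^s-x^s = s\xi^{s-1}y$, and $\xi^{s-1}\leqslant x^{s-1}$ since $s\geqslant 1$, so $|\varphi_x(y)|\leqslant s|y|$. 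If $y<0$ and $x+y<0$, then $\varphi_x(y) = -x$ with $x < |y|$, so $|\varphi_x(y)|\leqslant |y|$. Putting the cases together,
\[
|\varphi_x(y)| \leqslant C'_s\bigl(|y| + |y|^s\bigr) =: \Phi(y)
\]
for all $x\geqslant 1$.

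It remains to note that $\Phi$ is $\rho$-integrable: by Jensen's inequality applied to the probability measure $\rho$ and the convex function $t\mapsto t^s$ on $\R_+$, we have $\int_\R |y|\di\rho(y) \leqslant \bigl(\int_\R |y|^s\di\rho(y)\bigr)^{1/s} < +\infty$, and $\int |y|^s\di\rho$ is finite by assumption. The dominated convergence theorem then gives
\[
\lim_{x\to+\infty}\int_\R \varphi_x(y)\di\rho(y) = \int_\R sy\di\rho(y) = s\int_\R y\di\rho(y),
\]
which is the desired identity. The only substantive step is the case analysis producing the uniform bound $\Phi$; the pointwise limit and the final DCT application are immediate.
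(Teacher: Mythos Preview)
Your proof is correct. Both your argument and the paper's are dominated convergence arguments with an envelope of order $|y|+|y|^s$, so the approaches are essentially the same. The only organizational difference is that the paper first splits off the region $y<-x$ and then, on $y\geqslant -x$, writes $(1+y/x)^s-1 = s(y/x) + (y/x)\varphi(y/x)$ to isolate the main term $s\int y\,\di\rho$ before applying DCT to the remainder; you instead bound the whole integrand at once via the mean value theorem and apply DCT a single time. Your route is a bit more direct; the paper's makes the Taylor structure more explicit. Substantively they coincide.
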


\begin{proof}
Let's compute, for $x$ large,
\begin{flalign*}
 \int_\R \frac{\max(x+y,0)^s - x^s}{x^{s-1}} \di\rho(y) & = \int_{-\infty}^{-x} (-x) \di\rho(y) + \int_{-x}^{+\infty} \frac{(x+y)^s-x^s}{x^{s-1}} \di\rho(y) \\
 &= -x \rho(]-\infty,-x]) + \int_{-x}^{+\infty} x \left( \left(1+ \frac y x\right)^s -1 \right) \di\rho(y)
\end{flalign*}
Note, for $u\in [-1,0[ \cup ]0, +\infty[$,
\[
\varphi(u) = \frac{(1+u)^s -1 -su}{u}
\]
Then, $\lim_{u\to 0^+} \varphi(u) = 0$ so we can extend $\varphi$ by continuity at $0$ and there is a constant $C\in \R_+$ such that for any $u\in[-1, +\infty[$, $|\varphi(u)| \leqslant C(1+|u|^s)$. Thus, we have that
\begin{flalign*}
 \int_\R \frac{\max(x+y,0)^s - x^s}{x^{s-1}}& \di\rho(y) =  -x \rho(]-\infty,-x]) + \int_{-x}^{+\infty} x \left( s \frac y x + \frac y x \varphi(\frac y x) \right) \di\rho(y) \\
 &= -x \rho(]-\infty,-x]) + s\int_{-x}^{+\infty} y\di\rho(y) + \int_{-x}^{+\infty} y\varphi(\frac y x) \di\rho(y)
\end{flalign*}
But, for any $x\in[1,+\infty[$,
\[
\un_{]-x, +\infty]}(y) \left|y\varphi(\frac y x)\right| \leqslant C|y|(1+|y|)^{s-1} \in \mathrm{L}^1(\rho)
\]
and for any $y\in \R$, $\lim_{x\to +\infty} \un_{]-x, +\infty]}(y) \left|y\varphi(\frac y x)\right| = 0$, so the dominated convergence theorem proves that
\[
\lim_{x\to +\infty} \int_{-x}^{+\infty} y\varphi(\frac y x) \di\rho(y) = 0
\]
Moreover, as $\int_{\R} |y| \di\rho(y)$ is finite, we have that
\[
\lim_{x\to +\infty} \int_{-\infty}^{-x} |y| \di\rho(y) =0
\]
But,
\[
\int_{-\infty}^{-x} |y| \di\rho(y) \geqslant \int_{-\infty}^{-x} |x| \di\rho(y) = |x| \rho(]-\infty, -x]) \geqslant 0
\]
so $\lim_{x\to +\infty} |x|\rho (]-\infty, -x]) = 0$ and this finishes the proof of the lemma.
\end{proof}

\begin{corollary}
Let $s\in [1, +\infty[$ be such that $\int_\R |y|^s \di\rho(y)$ is finite.

Define, for $x\in \R_+$, $u_s(x) = 1+|x|^s$. 

Then, there are a constants $B_s, N_s$ such that 
\[
\frac{u_{s-1}}{N_s} \leqslant  u_s - Pu_s + B_s \leqslant N_s u_{s-1}
\]
Moreover, if $s\geqslant 2$, then $u_s$ is a drift function and $\mathcal{F}^1_{u_{s-1}} $ is continuously isomorphic to $\mathcal{E}^1_{u_s}$.

In particular, $u_{s-1} \in \mathrm{L}^1(\R_+, \mu)$.
\end{corollary}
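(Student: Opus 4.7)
The plan is first to derive the two-sided comparison between $u_s - Pu_s + B_s$ and $u_{s-1}$ using the previous lemma, then to verify the four clauses of Definition~\ref{def:drift}, and finally to read off the isomorphism and the $\mathrm L^1(\mu)$ statement. Since $x \in \R_+$, I would start by writing $u_s(x) = 1 + x^s$ and $Pu_s(x) = 1 + \int_\R \max(x+y,0)^s\di\rho(y)$, so that
\[
u_s(x) - Pu_s(x) = -\int_\R \bigl(\max(x+y,0)^s - x^s\bigr)\di\rho(y).
\]
The previous lemma then gives $(u_s - Pu_s)(x)/x^{s-1} \to -s\lambda$ as $x\to+\infty$, which is strictly positive by the negative drift assumption. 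This controls the non-compact regime; on any compact $[0,M]$ the finiteness of $\int |y|^s\di\rho$ makes $Pu_s$ bounded, so $u_s - Pu_s$ is too, and since $u_{s-1}\geqslant 1$ everywhere, an appropriate choice of $B_s, N_s$ stitches the two regimes together into the asserted inequality.

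To check the drift axioms for $s \geqslant 2$, the lower-boundedness of $u_s - Pu_s$ follows from the two-sided bound. The stopping time $\tau$ is defined precisely so that $X_{\tau-1}+Y_\tau\leqslant 0$, hence $X_\tau = 0$ almost surely and $Qu_s(x) = u_s(0) = 1$ is bounded on $\R_+$. For $\esp_\cdot\tau / u_s$, a Wald-type argument (coupling $X_n$ with the non-reflected walk $S_n = x + Y_1+\cdots+Y_n$ up to $\tau$, using $S_\tau \leqslant 0$) gives $\esp_x\tau \leqslant C(1+x)$, and the quotient is then bounded since $s\geqslant 2$ makes $u_s \geqslant 1 + x^2$. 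The last clause, via the two-sided comparison, reduces to showing $Pu_{s-1}/u_{s-1}$ is bounded: the previous lemma applied with exponent $s-1\geqslant 1$ (whose $(s-1)$-th moment is finite since $|y|^{s-1}\leqslant 1+|y|^s$) yields $Pu_{s-1}/u_{s-1}\to 1$ at infinity, and local boundedness handles the compact part.

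The isomorphism $\mathcal F^1_{u_{s-1}} \cong \mathcal E^1_{u_s}$ is then immediate from the two-sided bound and Remark~\ref{remark:Lu_Lv}, since $\mathcal E^1_{u_s} = \mathcal F^1_{u_s - Pu_s + B_s}$. Finally $u_{s-1}\in\mathcal F^1_{u_{s-1}}$ tautologically, hence lies in $\mathcal E^1_{u_s}$, and Lemma~\ref{lemma:extmesures} applied to the $P$-invariant probability $\mu$ gives $u_{s-1}\in\mathrm L^1(\R_+,\mu)$. The main obstacle I expect is the linear bound $\esp_x\tau\leqslant C(1+x)$: Wald's identity must be applied carefully because of the reflection at $0$ and the unbounded jumps, so the cleanest route is probably to apply it to $\min(\tau,n)$ and pass to the limit using the first-moment integrability. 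The asymptotic-versus-compact patching for the two-sided comparison is routine but requires some care in the choice of the constants $B_s$ and $N_s$.
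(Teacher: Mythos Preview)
Your proposal is correct and follows the paper's line almost exactly: the same use of the asymptotic lemma to get the two-sided comparison by patching the large-$x$ regime with a compact piece, the same observation that $Qu_s(x)=u_s(0)$, the same reduction of the fourth drift axiom to the boundedness of $Pu_{s-1}/u_{s-1}$ via the two-sided estimate at level $s-1$, and the same appeal to Remark~\ref{remark:Lu_Lv} for the isomorphism.

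The one genuine difference is how you handle the clause $\esp_x\tau/u_s$ bounded. The paper writes ``$1\in\mathcal E_{u_s}$ so $R1\in\mathcal F_{u_s}$'', i.e.\ it leans on the machinery of Lemma~\ref{lemma:Rcontinu} (more precisely on the identity $R(u_s-Pu_s+B_s)=u_s-Qu_s+B_s\esp_x\tau$ established in its proof). Your Wald argument on the un-reflected walk $S_n=x+Y_1+\cdots+Y_n$, using that $X_k=S_k$ for $k<\tau$ and $S_\tau\leqslant 0$, yields the explicit linear bound $\esp_x\tau\leqslant x/|\lambda|$ directly. This is a clean, model-specific shortcut that sidesteps any appearance of circularity in invoking Lemma~\ref{lemma:Rcontinu} before $u_s$ is known to be a drift function; the paper's route stays within the general operator framework but is less self-contained at this one step. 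You also spell out the appeal to Lemma~\ref{lemma:extmesures} for $u_{s-1}\in\mathrm L^1(\mu)$, which the paper leaves implicit.
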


\begin{proof}
According to the previous lemma, we have
\[
\lim_{x\to +\infty} \frac{Pu_s(x) - u_s(x)}{u_{s-1}(x)} = s\int_\R y\di\rho(y) <0
\]
so, there are $x_0,\varepsilon \in \R_+^\star$ such that for any $x\geqslant x_0$, $u_s(x) - Pu_s(x) \geqslant \varepsilon u_{s-1}(x)$.

Now, the function $\varepsilon u_{s-1} + Pu_s - u_s$ is continuous on $[0,x_0]$ so it is bounded by some non negative constant that we note $B_s$ and we have that for any $x\in \R_+$,
\[
\varepsilon u_{s-1} \leqslant u_s - Pu_s + B_s
\]
To find the other domination, we apply once again the previous lemma to find that $(u_s - Pu_s + B_s)/u_{s-1}$ is bounded.

If $s\geqslant 2$, as $u_{s-1}$ is non negative, this also proves that $u_s - Pu_s$ is bounded from below. Moreover, $Qu(x) = u(0)$ is finite, $\esp_x \tau = R1(x)$ and $1 \in \mathcal{E}_{u_{s}}$ so $R1 \in \mathcal{F}_{u_{s}}$

Finally,
\begin{flalign*}
P(u_s-Pu_s+B_s) &\leqslant N_s P u_{s-1} \leqslant N_s \left( u_{s-1} + B_{s-1}\right) \\
&\leqslant N_s \left( N_s(u_s-Pu_s + B_s) + B_{s-1} \right)
\end{flalign*}
and this finishes the proof that $u_s$ is a drift function for $s\geqslant 2$.
\end{proof}

\begin{proposition} \label{proposition:poisson_equation_marche_max}
Let $\rho$ be a probability measure on $\R$ having a negative drift and a polynomial moment of order $s$ for some $s\in [1, +\infty[$.

Then, for any $f\in \mathcal{F}^1_{u_{s-1}}$, there is $g\in \mathcal{F}^\infty_{u_s}$ such that $f=g-Pg + \int f\di\mu$.
\end{proposition}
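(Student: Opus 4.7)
My plan is to use Remark~\ref{remark:poisson}: set $\tilde f = f - \mu(f)$, solve the induced Poisson equation $h - Qh = R\tilde f$ on the induced chain, and then assemble $g = R\tilde f + Qh$ so that $g - P g = \tilde f$, i.e.\ $f = g - P g + \mu(f)$.

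The decisive observation is that the induced chain is trivial in this example. By the very definition of $\tau$ one has $X_{\tau-1} + Y_\tau \leqslant 0$, so $X_\tau = \max(X_{\tau-1}+Y_\tau,0) = 0$ almost surely from every starting point $x \in \R_+$. Consequently $X_{\tau^n} = 0$ for every $n \geqslant 1$, so $Q h(x) = h(0)$ for every function $h$ on which $Q$ is defined, and $\delta_0$ is the unique $Q$-invariant probability measure. Using the formula $\mu(f) = R f(0)/R 1(0)$ established in the proof of the invariance proposition, the normalization $\tilde f = f - \mu(f)$ is precisely what forces $R\tilde f(0) = 0$. Taking $h = R\tilde f$, one has $Qh \equiv h(0) = 0$, and therefore $h - Qh = R\tilde f$ holds tautologically. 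Remark~\ref{remark:poisson} then yields
\[
g := R\tilde f + Qh = R\tilde f, \qquad g - P g = (I_d - SR)\tilde f + S(h - Qh - R\tilde f) = \tilde f,
\]
where the cancellation $SR\tilde f = 0$ follows from $Sk(x) = \prob_x(\tau = 1)\, k(0)$ applied to $k = R\tilde f$, combined with $R\tilde f(0) = 0$.

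It remains to check that $g = R\tilde f$ lies in $\mathcal F^\infty_{u_s}$. The corollary preceding the statement shows that $u_s$ is a drift function (for $s \geqslant 2$; the case $s \in [1,2[$ is handled identically once one notes the analogous estimates still hold) and provides a continuous isomorphism $\mathcal F^1_{u_{s-1}} \simeq \mathcal E^1_{u_s}$. Hence $\tilde f \in \mathcal E^1_{u_s}$, and Lemma~\ref{lemma:Rcontinu} applied with $u = u_s$ gives $R\tilde f \in \mathcal F^1_{u_s}$, which is the desired bound.

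The only real difficulty is bookkeeping: one must verify that the manipulations of $P$, $Q$, $R$, $S$ used in Remark~\ref{remark:poisson} are legitimate on the concrete spaces at hand (all the operators involved are continuous on $\mathcal E^1_{u_s}$ and $\mathcal F^1_{u_s}$ by Lemmas~\ref{lemma:Rcontinu}--\ref{lemma:Scontinu}), and that the constant $\mu(f)$ dictated by the constraint $R\tilde f(0) = 0$ coincides with the stationary mass of $f$ — which is exactly the content of the formula $\mu(f) = Rf(0)/R 1(0)$ already proved.
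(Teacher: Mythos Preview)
Your argument is correct and is essentially the paper's own proof: both set $\tilde f=f-\mu(f)$, observe that $X_\tau=0$ a.s.\ forces $SR\tilde f=0$ (equivalently $QR\tilde f=R\tilde f(0)=0$), and conclude from $(I_d-P)R\tilde f=\tilde f-SR\tilde f=\tilde f$ together with Lemma~\ref{lemma:Rcontinu} that $g=R\tilde f\in\mathcal F^1_{u_s}$ works. Your detour through Remark~\ref{remark:poisson} (solving $h-Qh=R\tilde f$ with $h=R\tilde f$) is just a repackaging of the same computation, since the induced chain being constant makes that equation trivial; the paper skips this and uses the identity $(I_d-P)Rf=f-SRf$ from Proposition~\ref{proposition:relations_operateurs} directly.
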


\begin{proof}
Using, the previous corollary, we have that any $f\in \mathcal{ F}^1_{u_{s-1}}$ belong to $\mathcal{E}^1_{u_s}$. Moreover, we have, according to lemma~\ref{lemma:Rcontinu}, that $R$ continuously maps $\mathcal{E}^1_{u_s}$ onto $\mathcal{F}^1_{u_s}$ so, for any $f\in \mathcal{F}^1_{u_{s-1}}$, $Rf \in \mathcal{F}^1_{u_s}$.

Finally, using the equations of proposition~\ref{proposition:relations_operateurs}, we also have that
\[
(I_d-P)Rf = f - SRf
\]
but for any $x\in \R_+$,
\[
SRf(x) = \int_{\{\tau = 1\}} Rf(X_1) \di\prob_x((X_n)) = Rf(0) \prob_x(\tau =1) =  R1(0) \prob_x(\tau =1) \int f\di\mu
\]
since, $\mu(f) = \frac{Rf(0)}{R1(0)}$. So, applying this to $\widehat f =f-\int f \di \mu$, we get that $SR\widehat f=0$, $R\widehat f \in \mathcal{F}^1_{u_s}$ and $(I_d-P)R\widehat f = \widehat f = f - \int f\di\mu$ so $R\widehat f$ is the required solution to Poisson's equation.
\end{proof}

\begin{proposition}\label{proposition:LLN_CLT_LIL_marche_max}
Let $\rho$ be a probability measure on $\R$ having a finite first moment and a negative drift $\lambda = \int_\R y\di\rho(y)$.

Let $\alpha\in \R_+$.

If there is $\varepsilon \in \R_+^\star$ such that $\int_\R |y|^{2+\alpha + \varepsilon} \di\rho(y)$ is finite, then for any $f \in \mathcal{F}_{u_{\alpha}}^{1}$ and any $x\in \R_+$,
\[
\frac 1 n \sum_{k=0}^{n-1} f(X_k) \xrightarrow[n\to +\infty]\, \int f \di \mu \; a.e. \text{ and in }\mathrm{L}^1(\prob_x)
\]

If there is $\varepsilon\in \R_+^\star$ such that $\int_\R |y|^{4+\alpha + \varepsilon} \di\rho(y)$ then for any $f\in \mathcal{E}_{u_\alpha}$ and any $x\in \R_+$, 
\[
\sigma^2_n(f,x)= \frac 1 n \esp_x \left|\sum_{k=0}^{n-1} f(X_k) - \int f \di \mu \right|^2 
\]
converges to some $\sigma^2(f)$ and if $\sigma^2(f) \not =0$, then
\[
\frac 1 {\sqrt n} \sum_{k=0}^{n-1} f(X_k) \xrightarrow {\cal L} \cal N\left( \int f\di\mu, \sigma^2(f) \right)
\]
and
\[
\limsup \frac{\sum_{k=0}^{n-1} f(X_k) -\int f\di \mu}{\sqrt{2n \sigma^2(f) \ln\ln(n)}} =1 \;a.e.\text{ and }\liminf \frac{\sum_{k=0}^{n-1} f(X_k) - \int f \di \mu}{\sqrt{2 n \sigma^2(f) \ln\ln(n)}} =-1 \; a.e.
\]
\end{proposition}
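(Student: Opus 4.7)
The strategy is to reduce each of the three assertions to its martingale counterpart via Poisson's equation. Write $\widehat f = f - \int f\di\mu$. Apply Proposition~\ref{proposition:poisson_equation_marche_max} -- admissible because the moment hypothesis makes $u_s$ a drift function for a suitable $s \geqslant \max(2,\alpha+1)$ and $\mathcal{F}^1_{u_\alpha} \subset \mathcal{F}^1_{u_{s-1}}$ -- to obtain $g = R\widehat f \in \mathcal{F}^1_{u_s}$ with $\widehat f = g - Pg$. The telescoping identity
\[
S_n := \sum_{k=0}^{n-1} f(X_k) - n\int f\di\mu = \bigl(g(X_0) - g(X_n)\bigr) + M_n,
\]
where $M_n = \sum_{k=0}^{n-1}(g(X_{k+1})-Pg(X_k))$ is a $\prob_x$-martingale, then reduces the asymptotics of $S_n$ to those of $M_n$ provided the boundary term $g(X_n)$ is negligible at the appropriate scale; this follows from the elementary bound $\esp_x|g(X_n)| \leqslant \|g\|_{\mathcal{F}^1_{u_s}}(u_s(x) + nB_s)$, coming from $Pu_s \leqslant u_s + B_s$, together with analogous higher-moment estimates and Borel--Cantelli.

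For the LLN the simplest route is the renewal approach based on the excursions between consecutive visits of the chain to $\{0\}$. By the strong Markov property at the return times, the blocks $W_j := \sum_{k=\tau^{j-1}}^{\tau^j-1} f(X_k)$ form an i.i.d.\ sequence of common mean $\esp_0 W_1 = Rf(0) = \esp_0\tau \cdot \int f\di\mu$ (by the explicit formula for $\mu$), and $\esp_0 |W_1| = R|f|(0) < \infty$ since $|f|\in \mathcal{F}^1_{u_\alpha} \subset \mathcal{E}^1_{u_s}$ and $R$ maps $\mathcal{E}^1_{u_s}$ to $\mathcal{F}^1_{u_s}$ by Lemma~\ref{lemma:Rcontinu}. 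Kolmogorov's LLN and $\tau^n/n \to \esp_0\tau$ (LLN for the i.i.d.\ return times) combine to give $\frac 1 n S_n \to 0$ $\prob_0$-almost surely; the strong Markov property at the first return extends this to arbitrary starting points, and $\mathrm L^1(\prob_x)$ convergence follows from Chacon--Ornstein applied to $f \in \mathrm L^1(\X,\mu)$, which holds via Lemma~\ref{lemma:extmesures}.

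For the CLT and LIL apply Corollary~\ref{corollary:TCL_martingales} to $g$. The hypothesis $g \in \mathcal{E}^{2+\alpha'}_v$ for some drift function $v$ and some $\alpha' > 0$ is obtained by bounding $|g|^{2+\alpha'} \leqslant Cu_s^{2+\alpha'} \leqslant C'u_{s'-1}$ for $s' \geqslant s(2+\alpha')+1$, and choosing $\alpha'$ small enough so that $s' \leqslant 4+\alpha+\varepsilon$; the moment hypothesis then makes $u_{s'}$ a drift function and the isomorphism $\mathcal{F}^1_{u_{s'-1}} \simeq \mathcal{E}^1_{u_{s'}}$ supplies the required inclusion. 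The remaining input is the convergence of the predictable bracket
\[
V_n(x) := \frac 1 n \sum_{k=0}^{n-1}\bigl(Pg^2(X_k) - (Pg)^2(X_k)\bigr)
\]
to a deterministic $\sigma^2(f)$, both $\prob_x$-a.s.\ and in $\mathrm L^1(\prob_x)$. Since $Pg^2 - (Pg)^2 \leqslant Pg^2 \leqslant Cu_{2s}$ lies in a suitable $\mathcal{F}^1_{u_{s''-1}}$ space, this is itself an instance of the LLN just proved, and hence $\sigma^2(f) = \int (Pg^2 - (Pg)^2)\di\mu$ is independent of $x$; a direct expansion of $\esp_x S_n^2$, using the telescoping identity to kill boundary and cross terms, identifies the same constant with $\lim \sigma_n^2(f,x)$. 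The remark following Corollary~\ref{corollary:TCL_martingales} then allows one to replace $\sum(g(X_{k+1}) - Pg(X_k))$ by $\sum(g(X_k) - Pg(X_k))$, matching the telescoping identity and transferring the CLT and LIL from $M_n/a_n$ to $S_n/a_n$.

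The main technical obstacle is the drift-exponent bookkeeping: $s$ must be chosen so that $f\in\mathcal{F}^1_{u_{s-1}}$ (forcing $s \geqslant \alpha+1$) while simultaneously $|g|^{2+\alpha'}$ fits into an $\mathcal{E}^1_{u_{s'}}$ space with $u_{s'}$ itself a drift function, i.e.\ $s(2+\alpha')+1 \leqslant 4+\alpha+\varepsilon$ for some $\alpha' > 0$. The prescribed moments $\int|y|^{2+\alpha+\varepsilon}\di\rho$ (for the LLN) and $\int|y|^{4+\alpha+\varepsilon}\di\rho$ (for the CLT/LIL) leave exactly enough slack to make this possible with $\alpha' > 0$ in every regime of $\alpha$, which is what dictates the precise form of the moment hypotheses in the statement.
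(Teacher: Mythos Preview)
Your overall architecture---Poisson equation via Proposition~\ref{proposition:poisson_equation_marche_max}, the telescoping identity $S_n=g(X_0)-g(X_n)+M_n$, drift bookkeeping to place $|g|^{2+\alpha'}$ in an $\mathcal E^1_{u_{s'}}$ space, then Corollary~\ref{corollary:TCL_martingales} for the CLT/LIL with the predictable bracket handled by feeding $Pg^2-(Pg)^2$ back into the LLN---is exactly the paper's route, and that part is fine.

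Where you diverge is the LLN. The paper stays with the martingale decomposition throughout: it applies Proposition~\ref{proposition:lln_martingales} to $M_n$ and Lemma~\ref{lemma:u(X_n)/n} (after an interpolation: apply the lemma to $|g|^{1+\alpha'}\in\mathcal E^1_{u_{(1+\alpha')(\alpha+2)}}$ with a small exponent $\beta<\alpha'$, so that $(1+\beta)/(1+\alpha')<1$) to kill the boundary term $g(X_n)/n$ both a.e.\ and in $\mathrm L^1(\prob_x)$. You instead switch to a regenerative argument with i.i.d.\ blocks $W_j$ between returns to $0$. That is a perfectly legitimate alternative for the \emph{almost sure} statement, and it has the merit of bypassing the martingale machinery entirely for the LLN.

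However, two points need repair. First, your ``elementary bound'' $\esp_x|g(X_n)|\leqslant \|g\|_{\mathcal F^1_{u_s}}(u_s(x)+nB_s)$ is only $O(n)$, hence useless for showing $g(X_n)/n\to 0$; you need the finer summability estimate of Lemma~\ref{lemma:u(X_n)/n} (or, in your regenerative approach, you must control the overshoot block $W_{N(n)+1}$ explicitly, which you do not mention). Second, and more seriously, your claim that ``$\mathrm L^1(\prob_x)$ convergence follows from Chacon--Ornstein'' is wrong: Chacon--Ornstein is an almost-everywhere ratio ergodic theorem with respect to the invariant measure $\mu$, and it gives neither $\mathrm L^1(\mu)$ nor $\mathrm L^1(\prob_x)$ convergence. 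To get $\mathrm L^1(\prob_x)$ you would need a uniform-integrability argument (e.g.\ bound $\esp_x|S_n/n|^{1+\alpha'}$ uniformly via the drift inequality), or simply revert to the paper's route, where Proposition~\ref{proposition:lln_martingales} already delivers convergence in $\mathrm L^{1+\alpha'}(\prob_x)$.
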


\begin{proof}
The moment assumption and proposition~\ref{proposition:poisson_equation_marche_max} proves that for any $f\in \cal{F}_{u_\alpha}^1$, there is $g\in \cal {F}_{u_{\alpha+1}}$ such that $f-\int f \di\mu = g-Pg$.

Moreover, $\cal {F}_{u_{\alpha+1}} \subset \cal{E}_{u_{\alpha+2}}$ so, for any $\alpha' \in \R_+$, $|g|^{1+\alpha'} \in \cal E_{u_{(1+\alpha')(\alpha+2)}}$. And this means that $|g| \in \cal{E}^{1+\alpha'}_{u_{(1+\alpha')(\alpha+2)}}$.

Therefore, we can write
\[
\sum_{k=0}^{n-1} f(X_k) - \int f\di\mu = g(X_0) - g(X_n) + \sum_{k=0}^{n-1} g(X_{k+1}) - Pg(X_k)
\]
and using lemma~\ref{lemma:u(X_n)/n} and ~\ref{proposition:lln_martingales}, we get the first expected result if there is $\varepsilon \in \R_+^\star$ such that $\int_\R |y|^{2+\alpha+\varepsilon} \di\rho(y)$ is finite and if we choose $\alpha'$ small enough.

Let us now compute, using the reverse triangular inequality in $\mathrm{L}^2(\prob_x)$
\[
\left|\sqrt{\sigma^2_n(f,x)} -\left|\frac 1 n \esp_x \left|\sum_{k=0}^{n-1} g(X_{k+1})- Pg(X_k) \right|^2\right|^{1/2} \right| \leqslant \frac 1 {\sqrt n} \left|\esp_x \left|g(X_0) - g(X_n) \right|^2 \right|^{1/2}
\]
and
\[
 \frac 1 {\sqrt n} \left|\esp_x \left|g(X_0) - g(X_n) \right|^2 \right|^{1/2} \leqslant \frac 1 {\sqrt n} |g(X_0)| + \frac 1 {\sqrt n} \left( \esp_x g^2(X_n)\right)^{1/2}
\]
But, lemma~\ref{lemma:u(X_n)/n} proves that if there is $\varepsilon \in \R_+^\star$ such that $\int_\R |y|^{4+\alpha+ \varepsilon} \di\rho(y)$ is finite and if we choose $\alpha'=1+ \alpha''$ with $\alpha''$ small enough, then
\[
\frac 1 n \esp_x g^2(X_n) \xrightarrow \, 0
\]
Moreover, $(\sum_{k=0}^{n-1} g(X_{k+1}) - Pg(X_k))_n$ is a martingales and so
\begin{flalign*}
\esp_x \left| \sum_{k=0}^{n-1} g(X_{k+1}) - Pg(X_k) \right|^2 &= \sum_{k=0}^{n-1} \esp_x (g(X_{k+1}) - Pg(X_k))^2 \\ &=\esp_x \sum_{k=0}^{n-1} P(g^2)(X_k) - (Pg)^2(X_k)
\end{flalign*}
and applying the first part of the proposition to $P(g^2)$ and to $(Pg)^2$, we get that
\[
\frac 1 n \sum_{k=0}^{n-1} Pg^2(X_{k}) - (Pg(X_k))^2 \xrightarrow\, \int g^2- (Pg)^2 \di\mu \; a.e.\text{ and in }\mathrm{L}^1(\prob_x)
\]
so, $\sigma_n^2(f,x)$ also converges to $\int g^2 - (Pg)^2 \di \mu$ and if $\sigma^2(f)\not=0$ we can apply corollary~\ref{corollary:TCL_martingales} to get the central limit theorem and the law of the iterated logarithm.
\end{proof}

\bibliographystyle{amsalpha}
\bibliography{biblio}
\end{document}